\newtheorem{thm}{Theorem}[section]
\newtheorem{lem}[thm]{Lemma}
\theoremstyle{definition}
\newtheorem{defn}[thm]{Definition}
\theoremstyle{remark}
\newtheorem{rem}[thm]{Remark}
\numberwithin{equation}{section}
\begin{document}

\title[Commutators of multilinear singular integral operators]{Commutators of multilinear
singular integral operators on non-homogeneous metric measure spaces}%
\author{Rulong Xie, Huajun Gong$^*$ and Xiaoyao Zhou }
 \footnotetext {* Corresponding author}
\address{School of Mathematical Sciences, University of
Science and Technology of China, Hefei 230026, China}
\email{xierl@mail.ustc.edu.cn;huajun84@hotmail.com;zhouxiaoyaodeyouxian@126.com}%
\subjclass{42B20; 42B25}%
\keywords{multilinear singular integral; commutators;
non-homogeneous metric measure spaces; $RBMO(\mu)$ }%

\begin{abstract}
Let $(X,d,\mu)$ be a metric measure space satisfying both the
geometrically doubling and the upper doubling measure conditions,
which is called non-homogeneous metric measure space. In this
paper, via a sharp maximal operator, the boundedness of commutators
generated by multilinear singular integral with $RBMO(\mu)$ function
on non-homogeneous metric measure spaces in $m$-multiple Lebesgue spaces
is obtained.
\end{abstract}
\maketitle

\section{Introduction}

It is well known that the standard singular integral theory is
constructed with the assumption of spaces satisfying the doubling
measures condition. We recall that $\mu$ is said to satisfy the
doubling condition if there exists a constant $C>0$ such that
$\mu(B(x,2r))\leq C\mu(B(x,r))$ for all $x\in \rm{supp} \mu$ and
$r>0$. A metric measure space $(X,d,\mu)$ equipped with a
non-negative doubling  measure $\mu$ is called a space of
homogeneous type. In case of non-doubling measures, a non-negative
measure $\mu$ only need to satisfy the polynomial growth condition,
i.e., for all $x \in \mathbb{R}^{n}$ and $r>0$, there exist a constant $C_0>0$ and
$k\in(0, n]$ such that,
\begin{equation}\
\mu(B(x, r)) \leq C_{0}r^{k},
\end{equation}
where $B(x, r)= \{y \in \mathbb{R}^{n}: |y-x| < r\}$. This breakthrough brings
rapid development in harmonic analysis (see [2,5,8-9,17-22]). And
the analysis on non-doubling measures has important applications in
solving the long-standing open Painlev$\acute{e}$'s problem (see
[18]).

   However, as stated by Hyt\"{o}nen in [11], the measure
satisfying (1) does not include the doubling measure as special cases.
To solve this problem, a kind of metric measure space $(X,d,\mu)$,
which is called non-homogeneous metric measures space, satisfying
geometrically doubling and the upper doubling measure condition (see
Definition 1.1 and 1.2) is introduced by Hyt\"{o}nen in [11]. The
highlight of this kind of spaces is that it includes both the
homogeneous spaces and metric spaces with polynomial growth measures
as special cases. From then on, some results paralled to homogeneous
spaces and non-doubling measures space are obtained (see
[1,5,11-16] and the references therein). Hyt\"{o}nen et al. in [14]
and Bui and Duong in [1] independently introduced the atomic Hardy
space $H^{1}(\mu)$ and proved that the dual space of $H^{1}(\mu)$ is
$RBMO(\mu)$. In [1], the authors also proved that
Calder\'{o}n-Zygmund operator and commutators of
Calder\'{o}n-Zygmund operators and RBMO functions are bounded in
$L^{p}(\mu)$ for $1<p<\infty$. Recently, some equivalent
characterizations are established by Liu et al. in [16] for the
boundedness of Carder¡äon-Zygmund operators on $L^{p}(\mu)$ for
$1<p<\infty$. In [4], Fu et al. established boundedness of
multilinear commutators of Calder\'{o}n-Zygmund operators on Orlicz
spaces on non-homogeneous spaces.

   On the other hand, the theory on multilinear singular integral
operators has been considered by some researchers. In [3], Coifman
and Meyers firstly established the theory of bilinear
Calder\'{o}n-Zygmund operators. Later, Gorafakos and Torres [6-7]
established the boundedness of multilinear singular integral on the
product Lebesgue spaces and Hardy spaces. The properties of
multilinear singular integrals and commutators on non-doubling
measures spaces $(\mathbb{R}^{n},\mu)$ were established by Xu in [21-22].
Weighted norm inequalities for multilinear Calder\'{o}n-Zygmund
operators on non-homogeneous metric measure spaces were also
constructed in [10].

In the setting of non-homogeneous metric measure spaces, it is
natural to ask whether commutators of multilinear singular integral
operators is also bounded in $m$-multiple Lebesgue spaces. This paper
will give an affirmative answer to this question. In this paper,
commutators generated by multilinear singular integrals with
$RBMO(\mu)$ function on non-homogeneous metric spaces is introduced
firstly. And we will  prove that it is bounded in $m$-multiple Lebesgue
spaces on non-homogeneous metric spaces, provided that multilinear
singular integrals is bounded from $m$-multiple $L^1(\mu)\times \ldots
\times L^1(\mu)$ to $L^{1/m,\infty}(\mu)$, where $L^{p}(\mu)$ and
$L^{p,\infty}(\mu)$ denote the Lebesgue space and weak Lebesgue
space respectively. This result in this paper includes the
corresponding results on both the homogeneous spaces and
$(\mathbb R^{n},\mu)$ with non-doubling measures space. A variant of sharp
maximal operator $M^{\sharp}$, Kolmogorov's theorem and some good
properties of the dominating function $\lambda$ (see Definition 1.2)
are the main tools for proving the results of this paper.

Before stating the main results of this paper, we firstly recall some
notations and definitions.
\begin{defn}$^{[11]}$A metric space $(X,d)$
is called geometrically doubling if there exists some $N_{0}\in
\mathbf{N}$ such that, for any ball $B(x,r)\subset X$, there exists
a finite ball covering $\{B(x_i,r/2)\}_i$ of $B(x,r)$ such that the
cardinality of this covering is at most $N_0$.
 \end{defn}

\begin{defn}$^{[11]}$A metric measure space
$(X,d,\mu)$ is said to be upper doubling if $\mu$ is a Borel measure
on $X$ and there exists a dominating function $\lambda : X
\times(0,+\infty) \rightarrow (0,+\infty)$ and a constant
$C_{\lambda} >0$ such that for each $x\in X, r \longmapsto (x,r)$ is
non-decreasing, and for all $x\in X, r >0$,
 \begin{equation}
 \mu(B(x, r))\leq \lambda(x, r)\leq C_{\lambda}\lambda(x, r/2)
 \end{equation}
\end{defn}

\begin{rem}(i)\ A space of homogeneous type is a special case of upper
doubling spaces, where one can take the dominating function
$\lambda(x, r)= \mu(B(x,r))$. On the other hand, a metric space
$(X,d,\mu)$ satisfying the polynomial growth condition (1)(in
particular, $(X,d,\mu)=(\mathbb{R}^n, |\cdot|, \mu)$ with $\mu$
satisfying (1) for some $k\in (0, n])$) is also an upper doubling
measure space if we take $\lambda(x, r)=Cr^{k}$.

(ii)\ Let$(X,d,\mu)$ be an upper doubling space and $\lambda$ be a
dominating function on $X \times(0,+\infty)$ as in Definition 1.2. In
[14], it was showed that there exists another dominating function
$\tilde{\lambda}$ such that for all $x, y \in X$ with $d(x, y)\leq
r$,
\begin{equation}
\tilde{\lambda}(x, r)\leq \tilde{C}\tilde{\lambda}(y, r).
\end{equation}
 Thus, we suppose that $\lambda$ always satisfies (1.3) in this paper.
\end{rem}

\begin{defn}
 Let $\alpha,\beta \in (1,+\infty)$. A ball $B\subset X$ is called $(\alpha,
\beta)$-doubling if $\mu(\alpha B)\leq \beta \mu (B)$.
\end{defn}

As pointed in Lemma 2.3 of [1],  there exist plenties of doubling
balls with small radii and with large radii. In the rest of this
paper, unless $\alpha$ and $\beta$ are specified otherwise, by an
$(\alpha,\beta)$ doubling ball we mean a $(6,\beta_{0})$-doubling
with a fixed number $\beta_0 >\max\{C_{\lambda}^{3\log_{2}6},
6^{n}\}$, where $n=\log_{2}N_{0}$ is viewed as a geometric dimension
of the space.

\begin{defn}$^{[1]}$For any two balls
$B\subset Q$, define
\begin{equation}K_{B,Q} = 1+\int_{r_{B}\leq d(x,x_B)\leq r_Q}
\frac{d\mu(x)}{\lambda(x_B,d(x,x_B))}.
\end{equation}
And, for two balls $B\subset Q$, one define the coefficient
$K'_{B,Q}$ as follows. Let $N_{B,Q}$ be the smallest integer
satisfying $6^{N_{B,Q}}r_{B}\geq r_Q$, then we set
\begin{equation}
K'_{B,Q} = 1+\sum_{k=1}^{N_{B,Q}}
\frac{\mu(6^{k}B)}{\lambda(x_B,6^{k}r_{B})}.
\end{equation}
\end{defn}

\begin{rem} In the case that $\lambda(x, ar) =
a^{t}\lambda(x, r)$ for $0<t<\infty$, $ x \in X $,  and $ a,r > 0$, one know
that $K_{B,Q}\approx K'_{B,Q}$. However, in general, we only have
$K_{B,Q}\leq C K'_{B,Q}$. In this paper, we always suppose that
$\lambda(x, ar) = a^{t}\lambda(x, r)$ for $0<t<\infty$, $ x \in X $,  and $ a,r > 0$. So we don't differentiate $K_{B,Q}$ with $K'_{B,Q}$ and always
write $K_{B,Q}$ for simplicity in this paper.
\end{rem}

\begin{defn}
 A kernel $K(\cdot,\cdots,\cdot)\in L_{loc}^{1}((X)^{m+1}\backslash\{(x,y_{1}\cdots,y_{j},\cdots,y_{m}):x=y_{1}=\cdots=y_{j}=\cdots=y_{m}\})$
 is called an $m$-linear Calder\'{o}n-Zygmund kernel if it satisfies:

(i)\begin{equation}
 |K(x,y_{1},\cdots,y_{j},\cdots, y_{m})|\leq
C\biggl[\sum_{j=1}^{m}\lambda(x,d(x,y_{j}))\biggr]^{-m}
 \end{equation}
for all $(x,y_{1}\cdots,y_{j},\cdots,y_{m})\in (X)^{m+1}$ with
$x\neq y_{j}$ for some $j$.

 (ii) There exists $0<\delta\leq 1$ such that

\begin{equation}
\begin{split}
&|K(x,y_{1},\cdots,y_{j},\cdots,y_{m})-K(x',y_{1},\cdots,y_{j},\cdots,y_{m})|\\
\leq
&\frac{Cd(x,x')^{\delta}}{\biggl[\sum\limits_{j=1}^{m}d(x,y_{j})\biggr]^{\delta}\biggl[\sum\limits_{j=1}^{m}\lambda(x,d(x,y_{j}))\biggr]^{m}},
\end{split}
\end{equation}
provided that $Cd(x,x')\leq \max\limits_{1\leq j\leq m}d(x,y_{j})$ and
for each $j$,
 \begin{equation}
 \begin{split}
&|K(x,y_{1},\cdots,y_{j},\cdots,y_{m})-K(x,y_{1},\cdots,y'_{j},\cdots,y_{m})|\\
 &\leq\frac{Cd(y_{j},y'_{j})^{\delta}}{\biggl[\sum\limits_{j=1}^{m}d(x,y_{j})\biggr]^{\delta}
 \biggl[\sum\limits_{j=1}^{m}\lambda(x,d(x,y_{j}))\biggr]^{m}},
\end{split}
\end{equation}
provided that $Cd(y_{j},y'_{j})\leq \max\limits_{1\leq j\leq
m}d(x,y_{j})$.

A multilinear operator $T$ is called a multilinear
Calder\'{o}n-Zygmund singular integral operator with the above
kernel $K$ satisfying $(1.6)$, $(1.7)$ and $(1.8)$ if, for $f_{1},\cdots
f_{m}$ are $L^{\infty}$ functions with compact support and $x\notin
\bigcap_{j=1}^{m}\text{supp}f_{j}$,
 \begin{equation}
 T(f_{1},\cdots f_{m})(x)=\int_{X^{m}}K(x,y_{1},\cdots y_{m})f_{1}(y_{1})\cdots
 f_{m}(y_{m})d\mu(y_{1})\cdots d\mu(y_{m}).
\end{equation}
\end{defn}

\begin{rem}
Because $\max\limits_{1\leq j\leq m}d(x,y_{j})\leq
\sum\limits_{j=1}^{m}d(x,y_{j})\leq m\max\limits_{1\leq j\leq
m}d(x,y_{j})$, (ii) in Definition 1.7 is equivalent to (ii') in the
following statement.

(ii') There exists $0<\delta\leq 1$ such that

 \begin{equation}
 \begin{split}
 &|K(x,y_{1},\cdots,y_{j},\cdots,y_{m})-K(x',y_{1},\cdots,y_{j},\cdots,y_{m})|\\
 \leq & \frac{Cd(x,x')^{\delta}}{\biggl[\max\limits_{1\leq j\leq
m}d(x,y_{j})\biggr]^{\delta}\biggl[\sum\limits_{j=1}^{m}\lambda(x,d(x,y_{j}))\biggr]^{m}},
\end{split}
\end{equation}
provided that $Cd(x,x')\leq \max\limits_{1\leq j\leq m}d(x,y_{j})$ and
for each $j$,
\begin{equation}
 \begin{split}
&|K(x,y_{1},\cdots,y_{j},\cdots,y_{m})-K(x,y_{1},\cdots,y'_{j},\cdots,y_{m})|\\
 \leq &\frac{Cd(y_{j},y'_{j})^{\delta}}{\biggl[\max\limits_{1\leq j\leq
m}d(x,y_{j})\biggr]^{\delta}
 \biggl[\sum\limits_{j=1}^{m}\lambda(x,d(x,y_{j}))\biggr]^{m}},
\end{split}
\end{equation}
provided that $Cd(y_{j},y'_{j})\leq \max\limits_{1\leq j\leq
m}d(x,y_{j})$.
\end{rem}

\begin{defn}$^{[1]}$ Let $\rho>1$ be some
fixed constant. A function $b\in L_{loc}^{1}(\mu)$ is said to belong
to $RBMO(\mu)$ if there exists a constant $C
>0$ such that for any ball $B$
\begin{equation}
\frac{1}{\mu(\rho B)}\int_{B}|b(x)-m_{\widetilde{B}}b|d\mu(x)\leq C,
\end{equation}
and for any two doubling balls $B\subset Q$,
 \begin{equation}
  |m_{B}(b)-m_{Q}(b)|\leq CK_{B,Q},
\end{equation}
where $\widetilde{B}$ is the smallest $(\alpha,\beta)$-doubling ball of
the form $6^{k}B$ with $k\in {\mathbf{N}}\bigcup\{0\}$, and
$m_{\widetilde{B}}(b)$ is the mean value of $b$ on $\widetilde{B}$,
namely,
$$m_{\widetilde{B}}(b)=\frac{1}{\mu(\widetilde{B})}\int_{\widetilde{B}}b(x)d\mu(x).$$
The minimal constant $C$ appearing in (1.12) and (1.13) is defined to be
the $RBMO(\mu)$ norm of $b$ and denoted by $||b||_{\ast}$.
\end{defn}

For $1\leq i \leq k$, we denote by $C_{i}^{k}$ the family of all
finite subsets
$\sigma=\{\sigma(1),\sigma(2),\cdot\cdot\cdot,\sigma(i)\}$ of
$\{1,2,\cdot\cdot\cdot, k\}$ with $i$ different elements. For any
$\sigma\in C_{i}^{k}$, the complementary sequence $\sigma'$ is
given by $\sigma'=\{1,2,\cdot\cdot\cdot,k\}\backslash\sigma$.
Moreover, for $b_{i}\in RBMO(\mu),i=1,\cdots,k$, let
$\vec{b}=(b_{1},b_{2},\cdot\cdot\cdot,b_{k})$ be a finite family of
locally integrable function. For all $1\leq i\leq k$ and
$\sigma=\{\sigma(1),\cdot\cdot\cdot,\sigma(i)\}\in C_{i}^{k}$, we
set $\vec{b}_{\sigma}=(b_{\sigma(1)},\cdot\cdot\cdot,b_{\sigma(i)})$
and the product $b_{\sigma}(x)=b_{\sigma(1)}(x)\cdot\cdot\cdot
b_{\sigma(i)}(x)$. Also, we denote $\vec{f}=(f_{1},\cdots, f_{k})$,
$\vec{f}_{\sigma}=(f_{\sigma(1)},\cdots, f_{\sigma(i)})$ and
$\vec{b}_{\sigma'}\vec{f}_{\sigma'}=(b_{\sigma'(i+1)}f_{\sigma'(i+1)},\cdots,
b_{\sigma'(k)}f_{\sigma'(k)})$.

\begin{defn} A kind of commutators generated
by multilinear singular integral operator $T$ with $b_{i}\in
RBMO(\mu), i=1,\cdots, k$ is defined as follows:
\begin{equation}
[\vec{b},T](\vec{f})(x)=\sum_{i=0}^{k}\sum_{\sigma \in
C_{i}^{k}}(-1)^{k-i}b_{\sigma}(x)T(\vec{f}_{\sigma},\vec{b}_{\sigma'}\vec{f}_{\sigma'})(x).
\end{equation}
\end{defn}

In particular, when $k=2$, we can obtain
\begin{equation}
 \begin{split}
[b_{1},b_{2},T](f_{1},f_{2})(x)=&b_{1}(x)b_{2}(x)T(f_{1},f_{2})(x)-b_{1}(x)T(f_{1},b_{2}f_{2})(x)\\
&-b_{2}(x)T(b_{1}f_{1},f_{2})(x)+T(b_{1}f_{1},b_{2}f_{2})(x).
\end{split}
\end{equation}
Also, we define $[b_{1},T]$ and $[b_{2},T]$ as follows respectively.
\begin{equation}[b_{1},T](f_{1},f_{2})(x)=b_{1}(x)T(f_{1},f_{2})(x)-T(b_{1}f_{1},f_{2})(x),\end{equation}
\begin{equation}[b_{2},T](f_{1},f_{2})(x)=b_{2}(x)T(f_{1},f_{2})(x)-T(f_{1},b_{2}f_{2})(x).\end{equation}

For the sake of simplicity and without loss of generality, we only
consider the case of $k=2$ in this paper. Let us state the main
result as follows.

\begin{thm}
 Suppose that $\mu$ is a Radon measure with $||\mu||=\infty$. Let
$[b_{1},b_{2},T]$ defined by (1.15). Let $1<p_{1},p_{2}<+\infty$,
$f_{1}\in L^{p_{1}}(\mu)$, $f_{2}\in L^{p_{2}}(\mu)$, $b_{1}\in
RBMO(\mu)$ and $b_{2}\in RBMO(\mu)$. If $T$ is bounded from
$L^{1}(\mu)\times L^{1}(\mu)$ to $ L^{1/2,\infty}(\mu)$, then there
exists a constant $C>0$ such that
\begin{equation}
||[b_{1},b_{2},T](f_{1},f_{2})||_{L^{q}(\mu)}\leq C
||f_{1}||_{L^{p_{1}}(\mu)}||f_{2}||_{L^{p_{2}}(\mu)},
\end{equation}
where $\dfrac{1}{q}=\dfrac{1}{p_{1}}+\dfrac{1}{p_{2}}$.
\end{thm}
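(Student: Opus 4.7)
The plan is to prove Theorem~1.11 via the sharp maximal function approach adapted to non-homogeneous metric measure spaces. I would first recall the variant sharp maximal operator $M^{\sharp}$, which for $f\in L_{loc}^{1}(\mu)$ combines the oscillation $\mu(6B)^{-1}\int_{B}|f-m_{\widetilde{B}}f|\,d\mu$ over balls $B\ni x$ with the quantity $|m_{B}f-m_{Q}f|/K_{B,Q}$ over nested doubling balls $B\subset Q$ with $B\ni x$. Together with the $\delta$-variant $M_{\delta}^{\sharp}f=(M^{\sharp}(|f|^{\delta}))^{1/\delta}$ and the associated non-centered doubling maximal operator $N$, the inequality $\|f\|_{L^{p}(\mu)}\leq C\|M_{\delta}^{\sharp}f\|_{L^{p}(\mu)}$, valid for $f$ such that $\{|f|>t\}$ has finite $\mu$-measure (a condition ensured by $\|\mu\|=\infty$ plus a limiting argument), lets one transfer $L^{p}$ estimates to the oscillation level.

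The main step is a pointwise estimate of the form
\begin{equation*}
\begin{split}
M_{\delta}^{\sharp}([b_{1},b_{2},T](f_{1},f_{2}))(x)
\leq{}&C\|b_{1}\|_{\ast}\|b_{2}\|_{\ast}\bigl[M_{r}f_{1}(x)\,M_{r}f_{2}(x)+M_{r,6}(T(f_{1},f_{2}))(x)\bigr]\\
&+C\|b_{2}\|_{\ast}M_{\delta}([b_{1},T](f_{1},f_{2}))(x)\\
&+C\|b_{1}\|_{\ast}M_{\delta}([b_{2},T](f_{1},f_{2}))(x),
\end{split}
\end{equation*}
for some $0<\delta<1/2$ and some $r>1$ close to $1$. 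To prove it, fix a ball $B\ni x$ with doubling parent $\widetilde{B}$, subtract $m_{\widetilde{B}}b_{i}$ from each $b_{i}$, and split $f_{i}=f_{i}^{0}+f_{i}^{\infty}$ with $f_{i}^{0}=f_{i}\chi_{\frac{6}{5}B}$. Expanding (1.15) then produces four principal pieces. The purely local piece $T((b_{1}-m_{\widetilde{B}}b_{1})f_{1}^{0},(b_{2}-m_{\widetilde{B}}b_{2})f_{2}^{0})$ is estimated by Kolmogorov's inequality applied to the $L^{1}\times L^{1}\to L^{1/2,\infty}$ bound for $T$, together with the John--Nirenberg inequality for $RBMO(\mu)$ from [1] to absorb the $RBMO$ factors. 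The purely nonlocal piece is handled via the kernel estimates (1.6)--(1.8): decomposing $X\setminus\frac{6}{5}B$ into the annuli $6^{k+1}B\setminus 6^{k}B$, applying (1.3) together with $|m_{6^{k}B}b_{i}-m_{\widetilde{B}}b_{i}|\leq C k\,\|b_{i}\|_{\ast}$, produces a series in $k$ whose summation closes against $M_{r}f_{1}(x)\,M_{r}f_{2}(x)$. The four mixed local--nonlocal terms are treated in the same spirit; they are the source of the terms $M_{\delta}([b_{i},T](f_{1},f_{2}))$ on the right-hand side, since one is forced to reintroduce $b_{i}$ in full (rather than its oscillation) at certain stages of the argument.

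To conclude, I first note that multilinear Marcinkiewicz interpolation applied to the hypothesis $T:L^{1}(\mu)\times L^{1}(\mu)\to L^{1/2,\infty}(\mu)$ yields the strong-type bound $T:L^{p_{1}}(\mu)\times L^{p_{2}}(\mu)\to L^{q}(\mu)$. The linear commutators $[b_{i},T]$ are handled by the $k=1$ analogue of the same scheme, which involves only a single $RBMO$ factor and needs no nested induction. Combining these with H\"{o}lder's inequality and the $L^{p}(\mu)$ boundedness of $M$, $M_{r}$, and $M_{r,6}$ controls the right-hand side of the pointwise estimate by $C\|b_{1}\|_{\ast}\|b_{2}\|_{\ast}\|f_{1}\|_{L^{p_{1}}(\mu)}\|f_{2}\|_{L^{p_{2}}(\mu)}$ in $L^{q}(\mu)$. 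The sharp maximal inequality then yields (1.18).

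The main obstacle I anticipate is controlling the nonlocal contributions: in the non-homogeneous regime one has no polynomial growth of $\lambda$ to fall back on, only the upper doubling estimate (1.2) and the quantitative jump control (1.13). After iterating (1.13) $k$ times one picks up a linear factor in $k$, which must be absorbed by the geometric decay $6^{-k\delta}$ coming from the kernel regularity (1.7)--(1.8) together with the ratio $\mu(6^{k+1}B)/\lambda(x_{B},6^{k}r_{B})$ defining $K_{B,6^{k+1}B}$. Verifying that this summation converges uniformly in $B$, and that the resulting estimate of $M_{\delta}^{\sharp}([b_{1},b_{2},T](f_{1},f_{2}))$ is indeed dominated by products of known bounded maximal operators and the auxiliary single commutators, is the technical heart of the proof.
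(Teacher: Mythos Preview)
Your strategy coincides with the paper's: establish a pointwise sharp maximal estimate for $[b_1,b_2,T]$ (the paper's Lemma~2.5), then close via $\|N_\delta f\|_{L^q}\le C\|M_\delta^\sharp f\|_{L^q}$, the $L^q$-boundedness of $M_{r,(\rho)}$ for $\rho\ge5$, and the strong bound $T:L^{p_1}\times L^{p_2}\to L^q$ (the paper's Lemma~2.4, quoted from~[10] rather than rederived by interpolation). Two points in your sketch need adjustment. First, the single-commutator terms do not come from the mixed local--nonlocal pieces. The paper first expands algebraically,
\[
[b_1,b_2,T](f_1,f_2)(z)=\Bigl(\prod_i(b_i(z)-m_{\widetilde B}b_i)\Bigr)T(f_1,f_2)-\sum_i(b_i(z)-m_{\widetilde B}b_i)[b_{3-i},T](f_1,f_2)+T\bigl((b_1-m_{\widetilde B}b_1)f_1,(b_2-m_{\widetilde B}b_2)f_2\bigr),
\]
and the terms $M_{r,(6)}([b_i,T](f_1,f_2))$ arise immediately from the middle summands by H\"older and John--Nirenberg for $RBMO$. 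Only the last term is then split via $f_j=f_j\chi_{\frac65 B}+f_j\chi_{X\setminus\frac65 B}$, and all four resulting pieces ($E_{41}$--$E_{44}$ in the paper) are bounded directly by $\|b_1\|_*\|b_2\|_*M_{p_1,(5)}f_1(x)M_{p_2,(5)}f_2(x)$, with no further commutators appearing.

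Second, your sketch treats only the oscillation half of $M^\sharp$. The definition also requires, for every pair $B\subset Q$ with $Q$ doubling, the estimate $|h_B-h_Q|\le CK_{B,Q}^2\,(\text{same right-hand side})$, where $h_B=m_B\bigl(T((b_1-m_{\widetilde B}b_1)f_1\chi_{X\setminus\frac65 B},(b_2-m_{\widetilde B}b_2)f_2\chi_{X\setminus\frac65 B})\bigr)$. In the paper this is a separate and lengthy computation (the $F_1$--$F_6$ decomposition in~(2.27)), involving a telescoping between scales $B$ and $6^N B$ with $N=N_{B,Q}+1$; the factor $K_{B,Q}^2$ is genuinely needed. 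You should account for this step explicitly, as it is roughly half the work in the pointwise lemma.
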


 Throughout this paper, $C$ always denotes a
positive constant independent of the main parameters involved, but
it may be different from line to line. And $p'$ is the conjugate
index of $p$, namely, $\dfrac{1}{p}+\dfrac{1}{p'}=1.$

\section{Proof of Main Result}

To prove the main theorem, we firstly give some notations and
lemmas.

Let $f \in L_{loc}^{1}(\mu)$, the sharp maximal operator is defined
by
 \begin{equation}
M^{\sharp}f(x)=\sup _{B\ni
x}\frac{1}{\mu(6B)}\int_{B}|f(y)-m_{\widetilde{B}}(f)|d\mu(y)
+\sup_{(B,Q)\in\Delta_{x}}\frac{|m_{B}(f)-m_{Q}(f)|}{K_{B,Q}},
\end{equation}
where $\Delta_{x}:=\{(B,Q):x\in B\subset Q\ \text{and}\ B,\ Q\ \text{are
doubling balls}\}$ and the non centered doubling maximal operator is
denoted by
$$Nf(x)=\sup_{B\ni x,\atop B \
\text{doubling}}\frac{1}{\mu(B)}\int_{B}|f(y)|d\mu(y).$$ For any $0<\delta
< 1$, we also define that
 \begin{equation}
M^{\sharp}_{\delta}f(x)=\{M^{\sharp}(|f|^{\delta})(x)\}^{1/\delta}
\end{equation}
and
\begin{equation}
N_{\delta}f(x)=\{N(|f|^{\delta})(x)\}^{1/\delta}.
\end{equation}
We can obtain that for any $f\in L^{1}_{loc}(\mu)$,
\begin{equation}
|f(x)|\leq N_{\delta}f(x)
\end{equation} for $\mu-a.e. \ x \in X$.
Let us give an explanation for inequality (2.4). By the Lebesgue
differential theorem, we obtain that $|f(x)|\leq Nf(x)$. Hence
$$|f(x)|=[|f(x)|^{\delta}]^{1/\delta}\leq \{N(|f|^{\delta})(x)\}^{1/\delta}=N_{\delta}f(x).$$

Let $\rho>1$, $p\in (1,\infty)$ and $r\in (1,p)$, the non-centered
maximal operator $M_{r,(\rho)}f$ is defined by
\begin{equation}
 M_{r,(\rho)}f(x)=\sup_{B\ni x}\biggl\{\frac{1}{\mu(\rho
B)}\int_{B}|f(y)|^{r}d\mu(y)\biggr\}^{1/r}.
\end{equation}
When $r=1$, we simply write $ M_{1,(\rho)}f(x)$ as $M_{(\rho)}f$. If
$\rho\geq 5$, then the operator $M_{(\rho)}f$ is bounded on
$L^{p}(\mu)$ for $p>1$ and $M_{r,(\rho)}$ is bounded on $L^{p}(\mu)$
for $p>r$ (see [1]).

From Theorem 4.2 in [1], it is easy to obtain that
\begin{lem}
Let $f \in L^{1}_{loc}(\mu)$ with $\int_{X}f(x)d\mu(x)=0$ if
$||\mu||<\infty$. For $1<p<\infty$ and $0<\delta<1$, if
$\inf(1,N_{\delta}f)\in L^{p}(\mu)$, then there exists a constant
$C>0$ such that
\begin{equation}
||N_{\delta}(f)||_{L^{p}(\mu)}\leq
C||M_{\delta}^{\sharp}(f)||_{L^{p}(\mu)}.
\end{equation}
\end{lem}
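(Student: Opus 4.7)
The approach is to reduce the claim to Theorem 4.2 of [1], which is the corresponding Fefferman--Stein type inequality with $\delta=1$, i.e., $\|Nh\|_{L^{q}(\mu)}\le C\|M^{\sharp}h\|_{L^{q}(\mu)}$ for $1<q<\infty$ under analogous integrability (and, in the finite-measure case, zero-mean) hypotheses. The idea is to apply that result to the auxiliary function $h:=|f|^{\delta}$ with the shifted exponent $q:=p/\delta$; since $0<\delta<1$ and $p>1$, we have $q>1$, which is in the admissible range.

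First I would verify the hypotheses for $h$. Local integrability $h\in L^{1}_{loc}(\mu)$ follows from the pointwise bound $|f|^{\delta}\le 1+|f|$ together with $f\in L^{1}_{loc}(\mu)$. The hypothesis $\inf(1,N_{\delta}f)\in L^{p}(\mu)$ is equivalent to $\inf(1,Nh)\in L^{p/\delta}(\mu)$: by definition (2.3), $N_{\delta}f=(Nh)^{1/\delta}$, and the elementary pointwise identity $\min(1,t^{1/\delta})^{p}=\min(1,t)^{p/\delta}$ for $t\ge 0$ gives
\begin{equation*}
\int_{X}\min(1,N_{\delta}f)^{p}\,d\mu=\int_{X}\min(1,Nh)^{p/\delta}\,d\mu,
\end{equation*}
so the two conditions coincide.

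With these hypotheses in hand, Theorem 4.2 of [1] applied to $h$ with exponent $p/\delta$ yields $\|Nh\|_{L^{p/\delta}(\mu)}\le C\,\|M^{\sharp}h\|_{L^{p/\delta}(\mu)}$. Raising both sides to the $1/\delta$-power and using the scaling identity $\|\varphi\|_{L^{p/\delta}(\mu)}^{1/\delta}=\|\varphi^{1/\delta}\|_{L^{p}(\mu)}$ for nonnegative $\varphi$, once with $\varphi=Nh$ and once with $\varphi=M^{\sharp}h$, produces exactly the asserted inequality (2.6) thanks to the defining formulas (2.2)--(2.3). The constant picks up a harmless factor $C^{1/\delta}$.

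The one delicate point is the zero-mean hypothesis demanded by Theorem 4.2 when $\|\mu\|<\infty$: since $h=|f|^{\delta}\ge 0$, the condition $\int_{X}h\,d\mu=0$ would force $f\equiv 0$ and the lemma becomes trivial. In the setting of the main Theorem 1.11, however, $\|\mu\|=\infty$ is assumed throughout, so no zero-mean hypothesis on $h$ is required and the reduction goes through without modification. Thus the main obstacle is really only bookkeeping: checking that the integrability assumption on $N_{\delta}f$ transports correctly to one on $Nh$ so that Theorem 4.2 applies with the right exponent.
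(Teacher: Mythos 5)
Your reduction is correct and is precisely the ``easy'' step the paper leaves implicit: the paper offers no argument for Lemma~2.1 beyond the one-line remark that it follows from Theorem~4.2 in~[1], and the substitution $h=|f|^{\delta}$, $q=p/\delta>1$ you describe, together with the identities $N_{\delta}f=(Nh)^{1/\delta}$, $M^{\sharp}_{\delta}f=(M^{\sharp}h)^{1/\delta}$ and $\|\varphi^{1/\delta}\|_{L^{p}}=\|\varphi\|_{L^{p/\delta}}^{1/\delta}$, is the standard way to pass from the $\delta=1$ Fefferman--Stein inequality to the $\delta$-sharp version. Your verification that $\inf(1,N_{\delta}f)\in L^{p}(\mu)$ transports to $\inf(1,Nh)\in L^{p/\delta}(\mu)$ is also right.

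The caveat you flag about the zero-mean hypothesis is a genuine observation, not a defect in your argument. The paper's Lemma~2.1 inherits the condition $\int_{X}f\,d\mu=0$ when $\|\mu\|<\infty$ verbatim from the $\delta=1$ statement in~[1], but that condition does not translate into $\int_{X}|f|^{\delta}\,d\mu=0$ under your substitution, so the reduction as written does not cover the finite-measure case; strictly speaking the lemma in that case would need a separate argument (for instance a good-$\lambda$ proof as in Tolsa's treatment). Since Theorem~1.11 and Lemma~2.5 assume $\|\mu\|=\infty$ throughout, this has no bearing on how the lemma is actually used, and your proof is complete for the relevant regime. I would only suggest stating explicitly, rather than parenthetically, that the argument is being given under the standing assumption $\|\mu\|=\infty$.
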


\begin{lem}$^{[4,19]}$
Let $1\leq p<\infty$ and $1<\rho <\infty$. Then $b\in RBMO(\mu)$ if
and only if for any ball $B\subset X$,
\begin{equation}
\biggl\{\frac{1}{\mu(\rho
B)}\int_{B}|b_{B}-m_{\tilde{B}}(b)|^{p}d\mu(x)\biggr\}^{1/p}\leq
C||b||_{\ast},\end{equation} and for any two doubling balls
$B\subset Q$,
\begin{equation}
|m_{B}(b)-m_{Q}(b)|\leq C K_{B,Q}||b||_{\ast}.
\end{equation}
\end{lem}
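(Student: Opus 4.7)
The plan is to prove both directions of the equivalence, with the second estimate (2.8) being identical to condition (1.13) in the definition of $RBMO(\mu)$ and therefore automatic in both directions. The implication that (2.7) and (2.8) imply $b\in RBMO(\mu)$ reduces to deriving the $L^{1}$ oscillation bound (1.12) from the $L^{p}$ oscillation bound (2.7), which follows from a single application of H\"older's inequality combined with the trivial bound $\mu(B)\leq\mu(\rho B)$. The forward implication, that $b\in RBMO(\mu)$ entails the $L^{p}$ oscillation inequality (2.7), is the nontrivial content and is a John-Nirenberg type estimate adapted to non-homogeneous metric measure spaces.

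My plan for the forward direction is to first establish an exponential distributional inequality of the form
\[
\mu\bigl(\{x\in B:|b(x)-m_{\widetilde{B}}(b)|>t\}\bigr)\leq C_{1}\mu(\rho B)\exp\Bigl(-\frac{C_{2}t}{\|b\|_{\ast}}\Bigr),\qquad t>0,
\]
and then integrate against $pt^{p-1}\,dt$ via the layer-cake formula to recover (2.7). The exponential decay is produced by iterating a Calder\'on-Zygmund stopping-time decomposition applied to $b-m_{\widetilde{B}}(b)$ on the enlargement $\rho B$. At each stage one selects, via a Vitali-type covering, a family of essentially disjoint balls on which the mean of $|b-m_{\widetilde{B}}(b)|$ exceeds a chosen threshold $\lambda\|b\|_{\ast}$; on the complement the function is controlled pointwise by $\lambda\|b\|_{\ast}$. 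To continue the decomposition on each selected ball $B'$, the centering constant must be shifted from $m_{\widetilde{B}}(b)$ to $m_{\widetilde{B'}}(b)$, and condition (1.13) ensures this shift costs only $CK_{\widetilde{B'},\widetilde{B}}\|b\|_{\ast}$; since consecutive stopping levels differ in radius by a bounded factor, $K_{\widetilde{B'},\widetilde{B}}$ stays bounded and the error can be absorbed into the choice of $\lambda$. After $k$ iterations one obtains a measure bound of order $2^{-k}\mu(\rho B)$ at height comparable to $k\lambda\|b\|_{\ast}$, which produces the claimed exponential decay after optimizing in $\lambda$.

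The principal obstacle I anticipate is technical and stems from the non-homogeneous structure of $\mu$: the centering in $RBMO(\mu)$ is taken over the doubling hull $\widetilde{B}$ rather than $B$ itself, and the stopping sub-balls produced by the decomposition need not lie inside that hull. Controlling the change of centering at each iteration requires careful use of Definition 1.5 and the identification $K_{B,Q}\approx K'_{B,Q}$ from Remark 1.6, to guarantee that the error from moving between successive doubling hulls remains uniformly bounded instead of accumulating geometrically across iterations. Once this bookkeeping is handled, the Vitali covering argument, the geometric doubling condition from Definition 1.1, and standard iteration deliver the distributional estimate, and the layer-cake integration completes the proof.
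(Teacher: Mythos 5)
The paper does not prove Lemma 2.2 itself; it is quoted from references [4] and [19], so I am comparing your proposal against the proof those works use. Your backward implication (H\"older's inequality together with $\mu(B)\leq\mu(\rho B)$), your observation that (2.8) coincides with (1.13), and your overall plan for the forward direction --- an exponential John--Nirenberg estimate obtained by iterating a Calder\'on--Zygmund stopping-time decomposition and then integrating by the layer-cake formula --- all match the standard route.

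There is, however, a genuine gap in the recentering step. You claim that the shift from $m_{\widetilde{B}}(b)$ to $m_{\widetilde{B'}}(b)$ at a stopping ball $B'$ costs $CK_{\widetilde{B'},\widetilde{B}}\|b\|_{\ast}$ and that $K_{\widetilde{B'},\widetilde{B}}$ stays bounded because ``consecutive stopping levels differ in radius by a bounded factor.'' This premise is false: a single Calder\'on--Zygmund decomposition of $|b-m_{\widetilde{B}}(b)|$ at height $\lambda\|b\|_{\ast}$ produces stopping balls whose radii may be smaller than $r_B$ by an arbitrary factor, and by (1.4) the coefficient $K_{B',B}$ grows like $\log(r_B/r_{B'})$ (for instance $K_{B',B}\approx 1+c_n\log(r_B/r_{B'})$ when $\lambda(x,r)=r^n$ on $\mathbb{R}^n$). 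A bound of the form $CK_{\widetilde{B'},\widetilde{B}}\|b\|_{\ast}$ is therefore not uniform in the stopping scale, and the accumulated error cannot be absorbed into a fixed choice of $\lambda$ once you iterate. The correct mechanism is not (1.13) applied between $\widetilde{B'}$ and $\widetilde{B}$, but the maximality of the stopping ball: the parent $\widehat{B'}$ of $B'$ was not selected, so the average of $|b-m_{\widetilde{B}}(b)|$ over $\widehat{B'}$ is at most of order $\lambda\|b\|_{\ast}$, which bounds $|m_{\widehat{B'}}(b)-m_{\widetilde{B}}(b)|$ uniformly in the scale ratio. The residual shifts $|m_{B'}(b)-m_{\widehat{B'}}(b)|$ and $|m_{\widetilde{B'}}(b)-m_{B'}(b)|$ are then each $O(\|b\|_{\ast})$ because the associated $K$-coefficients are genuinely bounded (comparable balls, and passage to a doubling hull --- the latter is exactly where the choice $\beta_0>C_\lambda^{3\log_2 6}$ enters). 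With this replacement your outline closes; as written, the recentering estimate diverges with the depth of the iteration.
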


\begin{lem}$^{[4]}$
\begin{equation}
|m_{\widetilde{6^{j}\frac{6}{5}B}}(b)-m_{\tilde{B}}(b)|\leq
Cj||b||_{\ast}.
\end{equation}
\end{lem}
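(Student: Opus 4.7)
\emph{Proof proposal.} The plan is to chain from $\tilde{B}$ to $\widetilde{6^{j}\frac{6}{5}B}$ through a nested sequence of intermediate doubling balls and apply the second part of Lemma 2.2 on each link. Set $Q_{i}=\widetilde{6^{i}\frac{6}{5}B}$ for $i=0,1,\dots,j$. By the minimality built into Definition 1.4, each $Q_{i+1}$ is a non-negative power of $6$ dilate of $Q_{i}$, so the chain $Q_{0}\subset Q_{1}\subset\cdots\subset Q_{j}$ is nested and every $Q_{i}$ is $(6,\beta_{0})$-doubling. The triangle inequality combined with Lemma 2.2 then yields
\begin{equation*}
\bigl|m_{Q_{j}}(b)-m_{\tilde{B}}(b)\bigr|\leq \bigl|m_{\tilde{B}}(b)-m_{Q_{0}}(b)\bigr|+C||b||_{\ast}\sum_{i=0}^{j-1}K_{Q_{i},Q_{i+1}},
\end{equation*}
and the matter reduces to showing $K_{Q_{i},Q_{i+1}}\leq C$ uniformly in $i$.

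For that uniform bound I would invoke the $K'$-formulation of Definition 1.5 together with the standing hypothesis $\lambda(x,ar)=a^{t}\lambda(x,r)$ from Remark 1.6 to write
\begin{equation*}
K_{Q_{i},Q_{i+1}}\leq 1+\sum_{k=1}^{N_{i}}\frac{\mu(6^{k}Q_{i})}{6^{kt}\lambda(x_{Q_{i}},r_{Q_{i}})},
\end{equation*}
where $N_{i}$ is the smallest integer with $6^{N_{i}}r_{Q_{i}}\geq r_{Q_{i+1}}$. Minimality of $Q_{i+1}$ in its defining family forces every strict intermediate dilate $6^{k}Q_{i}$ with $0<k<N_{i}$ to fail the $(6,\beta_{0})$-doubling condition, so $\mu(6^{k+1}Q_{i})>\beta_{0}\mu(6^{k}Q_{i})$. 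Iterating this inequality upward and using the upper-doubling bound $\mu(6^{N_{i}}Q_{i})\leq\lambda(x_{Q_{i}},6^{N_{i}}r_{Q_{i}})$ at the top dominates the $k$-th summand by $(6^{t}/\beta_{0})^{N_{i}-k}$. Since the parameter choice $\beta_{0}>C_{\lambda}^{3\log_{2}6}$ together with the inequality $C_{\lambda}\geq 2^{t}$ (forced by $\lambda(x,r)\leq C_{\lambda}\lambda(x,r/2)$ combined with the scaling assumption) yields $\beta_{0}>6^{3t}>6^{t}$, the resulting geometric series converges to an absolute constant.

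Summing $j$ uniformly bounded contributions then gives $C(j+1)||b||_{\ast}$; the stray initial term $|m_{\tilde{B}}(b)-m_{Q_{0}}(b)|$ is absorbed into this either by treating it as one additional link of the same type or by comparing $\tilde{B}$ and $Q_{0}$ to a common doubling dilate via Lemma 2.2. The main obstacle is the uniform estimate on $K_{Q_{i},Q_{i+1}}$: the interplay between the polynomial growth of $\lambda$, the choice of $\beta_{0}$, and the tilde operation is essential, and the bookkeeping—particularly identifying the precise power of $6$ relating consecutive $Q_{i}$'s (they may even coincide when $6^{i+1}\frac{6}{5}B$ is itself doubling)—must be done carefully before the geometric non-doubling growth can be invoked cleanly.
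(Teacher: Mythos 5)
The paper does not actually prove this lemma; it is imported verbatim from reference [4] (Fu, Yang, Yuan). So there is no in-paper proof to compare against, and your attempt has to be judged on its own merits. Your chaining strategy is essentially the standard one for results of this type, going back to Tolsa's treatment of RBMO in the non-doubling setting and reproduced in [1] and [4]: pass through the nested chain $Q_{i}=\widetilde{6^{i}\frac{6}{5}B}$, bound each consecutive $K$-coefficient by an absolute constant, and sum. The heart of the matter — that between a doubling ball and the next tilde every intermediate $6$-dilate is non-doubling, which forces $\mu(6^{k+1}Q_i)>\beta_0\mu(6^kQ_i)$ and hence geometric decay of the normalized measures in the $K'$-sum, with convergence guaranteed because $\beta_{0}>C_{\lambda}^{3\log_{2}6}\geq 6^{3t}>6^{t}$ — is exactly right, and you correctly traced the role of the scaling hypothesis $\lambda(x,ar)=a^{t}\lambda(x,r)$ in producing the needed $C_{\lambda}\geq 2^{t}$.

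Two small points to tighten. First, the coincidence $Q_{i}=Q_{i+1}$ occurs precisely when $6^{i}\frac{6}{5}B$ is \emph{non}-doubling (so that the tilde of $6^{i}\frac{6}{5}B$ already jumps past $6^{i+1}\frac{6}{5}B$), not, as you wrote, when $6^{i+1}\frac{6}{5}B$ is itself doubling; if both $6^{i}\frac{6}{5}B$ and $6^{i+1}\frac{6}{5}B$ are doubling, then $Q_{i}=6^{i}\frac{6}{5}B\subsetneq 6^{i+1}\frac{6}{5}B=Q_{i+1}$. This does not impair the argument: in the coincident case the link contributes nothing, in the non-coincident case your non-doubling iteration applies. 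Second, the initial term $|m_{\tilde{B}}(b)-m_{\widetilde{\frac{6}{5}B}}(b)|$ genuinely needs separate treatment since $\tilde{B}$ and $\widetilde{\frac{6}{5}B}$ live in different $6$-dilate families (powers of $6$ times $B$ versus powers of $6$ times $\frac{6}{5}B$), so the non-doubling iteration does not apply word-for-word. The quickest fix is to invoke the standard facts that $K_{B,\tilde{B}}\leq C$ for every ball and that $K$ is quasi-subadditive along chains $B\subset Q\subset R$ (both recorded in [1] and [14]); combined with $K_{B,\frac{6}{5}B}\leq C$ (bounded radius ratio) this yields $K_{\tilde{B},\widetilde{\frac{6}{5}B}}\leq C$ regardless of which of the two contains the other. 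You flagged both of these loose ends yourself, so the proposal is sound in substance.
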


\begin{lem}$^{[10]}$
Suppose that $\mu$ is a Radon measure with $||\mu||=\infty$. Let
$T$ be defined by (1.9) with $m=2$. Let $1<p_{1},p_{2}<+\infty$, $f_{1}\in
L^{p_{1}}(\mu)$ and $f_{2}\in L^{p_{2}}(\mu)$. If $T$ is bounded
from $L^{1}(\mu)\times L^{1}(\mu)$ to $ L^{1/2,\infty}(\mu)$, then
there exists a constant $C>0$ such that
\begin{equation}
||T(f_{1},f_{2})||_{L^{q}(\mu)}\leq C
||f_{1}||_{L^{p_{1}}(\mu)}||f_{2}||_{L^{p_{2}}(\mu)},
\end{equation}
where $\dfrac{1}{q}=\dfrac{1}{p_{1}}+\dfrac{1}{p_{2}}$.
\end{lem}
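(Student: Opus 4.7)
The plan is to prove this lemma via the sharp maximal function technique. The main ingredient is a pointwise domination of the form
\begin{equation*}
M^{\sharp}_{\delta}(T(f_1,f_2))(x) \leq C\, M_{(5)}(f_1)(x) \cdot M_{(5)}(f_2)(x)
\end{equation*}
for some fixed $0 < \delta < 1/2$. Granted this, one combines it with the pointwise inequality $|T(f_1,f_2)| \leq N_{\delta} T(f_1,f_2)$ from (2.4), Lemma~2.1 (after a standard truncation/density step on bounded, compactly supported $f_i$ to verify that $\inf(1,N_{\delta} T(f_1,f_2)) \in L^{q}(\mu)$), H\"older's inequality with $1/q = 1/p_1 + 1/p_2$, and the $L^p$-boundedness of $M_{(5)}$ for $p > 1$, to obtain
\begin{equation*}
\|T(f_1,f_2)\|_{L^{q}(\mu)} \leq \|N_{\delta} T(f_1,f_2)\|_{L^{q}(\mu)} \leq C\,\|M^{\sharp}_{\delta} T(f_1,f_2)\|_{L^{q}(\mu)} \leq C\,\|f_1\|_{L^{p_1}(\mu)}\|f_2\|_{L^{p_2}(\mu)}.
\end{equation*}
The hypothesis $\|\mu\| = \infty$ makes the zero-mean requirement in Lemma~2.1 vacuous.

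For the pointwise estimate, fix $x\in X$ and any ball $B\ni x$, and split each $f_i = f_i^0 + f_i^\infty$ with $f_i^0 := f_i\chi_{\frac{6}{5}B}$. Then $T(f_1,f_2)$ decomposes into four pieces depending on which arguments are local or non-local. Choosing the reference constant $c_B := T(f_1^\infty,f_2^\infty)(x_B)$ (with $x_B$ the center of $B$) and using $\big||a|^{\delta} - |b|^{\delta}\big| \leq |a-b|^{\delta}$, valid for $0<\delta<1$, the oscillation part of $M^{\sharp}_{\delta}$ is controlled by the $L^{\delta}$-average of $|T(f_1,f_2)(y) - c_B|$ over $B$. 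The four pieces are then handled individually: (a) for the fully local term $T(f_1^0,f_2^0)$, Kolmogorov's inequality (legal since $\delta < 1/2$) together with the hypothesis $T: L^{1}(\mu)\times L^{1}(\mu)\to L^{1/2,\infty}(\mu)$ yields a bound $C\mu(B)^{-2}\|f_1^0\|_{L^1(\mu)}\|f_2^0\|_{L^1(\mu)}$, which is dominated by $M_{(5)}f_1(x)\,M_{(5)}f_2(x)$; (b) for the two mixed terms, the kernel size bound (1.6) is combined with an annular decomposition of the non-local argument into shells $6^{k}B\setminus 6^{k-1}B$, each shell contributing a summable factor involving $\mu(6^{k}B)/\lambda(x_B,6^{k}r_B)$ times local averages of the $f_i$; (c) for the fully non-local difference $T(f_1^\infty,f_2^\infty)(y) - T(f_1^\infty,f_2^\infty)(x_B)$, the H\"older smoothness (1.7) in the first variable supplies an extra factor $6^{-k\delta}$ on the $k$-th shell, forcing geometric convergence.

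The doubling-ball pair part of $M^{\sharp}(|T(f_1,f_2)|^{\delta})$, namely $\big|m_B(|T|^\delta) - m_Q(|T|^\delta)\big|/K_{B,Q}$ for doubling balls $B\subset Q$ containing $x$, is treated along the same lines but by comparing each mean to its own reference constant $T(f_1^\infty,f_2^\infty)(x_B)$ and $T(f_1^\infty,f_2^\infty)(x_Q)$ respectively. The resulting telescoping over the $N_{B,Q}\sim\log_{6}(r_Q/r_B)$ intermediate scales $6^{k}r_B$ produces exactly the defining sum $\sum_{k=1}^{N_{B,Q}}\mu(6^{k}B)/\lambda(x_B,6^{k}r_B)$ from (1.5), which cancels against $K_{B,Q}$.

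\textbf{Main obstacle.} The technical heart is the annular analysis in the upper-doubling setting, where the absence of a polynomial growth bound like (1) is compensated only by the dominating function $\lambda$. Property (1.3) (uniformity of $\lambda$ in the base point) together with the scaling $\lambda(x,ar) = a^{t}\lambda(x,r)$ from Remark~1.6 are both essential to make each shell-sum a geometric series and to match the paired-balls factor $K_{B,Q}$ exactly. A secondary delicate point is the Kolmogorov step in (a): the exponent $\delta$ must be chosen strictly below $1/2$, which is precisely why $M^{\sharp}_{\delta}$ (rather than $M^{\sharp}$) is the correct tool for an endpoint as weak as $L^{1/2,\infty}$.
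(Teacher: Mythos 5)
The paper does not actually prove this statement: the superscript ${}^{[10]}$ on Lemma~2.4 indicates it is quoted from Hu--Meng--Yang (reference [10]) and the paper cites it as a black box. So there is no ``paper's own proof'' to compare against. What can be said is that your proposed route---sharp maximal function control, local/non-local splitting, Kolmogorov for the local piece via the $L^1\times L^1\to L^{1/2,\infty}$ hypothesis, annular sums with the dominating function $\lambda$, and the Fefferman--Stein type Lemma~2.1 to pass from $M^\sharp_\delta$ back to the operator---is precisely the Tolsa-style machinery this paper itself deploys for the commutator case (Lemma~2.5 and the proof of Theorem~1.11), and is by all appearances the strategy of [10] as well. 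The structural outline is sound and correctly identifies the role of $0<\delta<1/2$ (forced by the $L^{1/2,\infty}$ endpoint), of property (1.3), and of the power-scaling $\lambda(x,ar)=a^t\lambda(x,r)$ from Remark~1.6 in making the shell sums geometric.

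Two points deserve flagging. First, you apply Lemma~2.1 at exponent $p=q$, but Lemma~2.1 is stated only for $1<p<\infty$, whereas $1/q=1/p_1+1/p_2$ with $p_1,p_2>1$ permits any $q\in(1/2,\infty)$, in particular $q\le 1$. The repair is standard but should be said: rewrite $\norm{N_\delta g}_{L^q}^\delta = \norm{N(|g|^\delta)}_{L^{q/\delta}}$ and likewise for $M^\sharp_\delta$, and then invoke the underlying Theorem~4.2 of [1] at exponent $q/\delta>1$ (this is legitimate since $\delta<1/2<q$). Note the surrounding paper only ever invokes Lemma~2.4 through Lemma~2.5, which explicitly restricts to $1<q<\infty$, so this limitation is shared with the text; still, your lemma as stated allows $q\le1$ and the proposal as written does not reach it. Second, you take the reference constant to be the pointwise value $c_B=T(f_1^\infty,f_2^\infty)(x_B)$; since this is only defined $\mu$-a.e., it is cleaner (and what the paper does in the commutator case) to use $h_B:=m_B\bigl(T(f_1^\infty,f_2^\infty)\bigr)$ and then average over $z_0\in B$ at the end of the H\"older-smoothness step. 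Neither issue is a conceptual obstruction; both are mechanical fixes to an otherwise correct outline.
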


\begin{lem}
Suppose that $[b_{1},b_{2},T]$ is defined by (1.15), $0<\delta<1/2$,
$1<p_{1},p_{2},q<\infty$, $1<r<q$ and $b_{1},b_{2}\in RBMO(\mu)$. If
$T$ is bounded from $L^{1}(\mu)\times L^{1}(\mu)$ to $
L^{1/2,\infty}(\mu)$, then there exists a constant $C>0$ such that
for any $x\in X$, $f_{1}\in L^{p_{1}}(\mu)$ and $f_{2}\in
L^{p_{2}}(\mu)$,
\begin{equation}
\begin{split}
&M_{\delta}^{\sharp}[b_{1},b_{2},T](f_{1},f_{2})(x)\leq
C||b_{1}||_{\ast}||b_{2}||_{\ast}M_{r,(6)}(T(f_{1},f_{2}))(x)\\
&\ \ +C||b_{1}||_{\ast}M_{r,(6)}([b_{2},T](f_{1},f_{2}))(x)
+C||b_{2}||_{\ast}M_{r,(6)}([b_{1},T](f_{1},f_{2}))(x)\\
&\ \
+C||b_{1}||_{\ast}||b_{2}||_{\ast}M_{p_{1},(5)}f_{1}(x)M_{p_{2},(5)}f_{2}(x),
\end{split}
\end{equation}

\begin{equation}
\begin{split}
M_{\delta}^{\sharp}[b_{1},T](f_{1},f_{2})(x)\leq
C||b_{1}||_{\ast}M_{r,(6)}(T(f_{1},f_{2}))(x)
+C||b_{1}||_{\ast}M_{p_{1},(5)}f_{1}(x)M_{p_{2},(5)}f_{2}(x),
\end{split}
\end{equation}
and
\begin{equation}
\begin{split}
M_{\delta}^{\sharp}[b_{2},T](f_{1},f_{2})(x)\leq
C||b_{2}||_{\ast}M_{r,(6)}(T(f_{1},f_{2}))(x)
+C||b_{2}||_{\ast}M_{p_{1},(5)}f_{1}(x)M_{p_{2},(5)}f_{2}(x).
\end{split}
\end{equation}
\end{lem}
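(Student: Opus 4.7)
The strategy is the standard one for controlling $M^{\sharp}_{\delta}$ of commutators: for each ball $B\ni x$, produce a constant $c_B$ such that
$$\frac{1}{\mu(6B)}\int_{B}|F(y)-c_B|^{\delta}\,d\mu(y)$$
is bounded by the right-hand side of the desired inequality raised to the $\delta$-th power (this suffices since, for $0<\delta<1/2$, one has $\bigl||a|^{\delta}-|b|^{\delta}\bigr|\le|a-b|^{\delta}$), and separately bound the $K_{B,Q}$-normalized difference $|c_B-c_Q|$ for doubling $B\subset Q$ with $x\in B$. Here $F$ denotes the commutator in question.

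I focus on the hardest inequality (2.11). Fix $B\ni x$ and set $\lambda_i=m_{\widetilde B}(b_i)$, $f_i^0=f_i\chi_{\frac{6}{5}B}$, $f_i^\infty=f_i-f_i^0$. A direct expansion, matching (1.15), gives
\begin{align*}
[b_1,b_2,T](f_1,f_2)
&=(b_1-\lambda_1)(b_2-\lambda_2)\,T(f_1,f_2)-(b_1-\lambda_1)\,T(f_1,(b_2-\lambda_2)f_2)\\
&\quad-(b_2-\lambda_2)\,T((b_1-\lambda_1)f_1,f_2)+T((b_1-\lambda_1)f_1,(b_2-\lambda_2)f_2).
\end{align*}
I split each $f_i$ in the last term as $f_i^0+f_i^\infty$ and choose
$$c_B=m_B\bigl(T((b_1-\lambda_1)f_1^\infty,(b_2-\lambda_2)f_2^\infty)\bigr).$$
The first three terms are controlled by H\"older's inequality (with exponents adapted to $r/\delta>1$) together with Lemma 2.2 for the RBMO factors, after rewriting $T(f_1,(b_2-\lambda_2)f_2)=(b_2-\lambda_2)T(f_1,f_2)-[b_2,T](f_1,f_2)$ (and its mirror); this produces the three $M_{r,(6)}$-terms on the right-hand side of (2.11). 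The three \emph{local} pieces from the last term (those involving at least one $f_k^0$) are handled via Kolmogorov's inequality applied with the $L^1\times L^1\to L^{1/2,\infty}$ bound from Lemma 2.4, together with H\"older and Lemma 2.2, producing the $||b_1||_{\ast}||b_2||_{\ast}M_{p_1,(5)}f_1(x)M_{p_2,(5)}f_2(x)$ contribution.

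The principal technical obstacle is the $(\infty,\infty)$ piece $T((b_1-\lambda_1)f_1^\infty,(b_2-\lambda_2)f_2^\infty)(y)-c_B$ for $y\in B$. I would exploit the kernel regularity (1.10) to swap $y$ with the ball center $x_B$, then decompose $(X\setminus\frac{6}{5}B)^{2}$ into annular products $(6^{j+1}B\setminus 6^jB)\times(6^{k+1}B\setminus 6^kB)$. On each annulus the kernel-difference bound contributes a geometric factor $C\,6^{-\delta\max(j,k)}$ times the reciprocal of two $\lambda$-volumes; Lemma 2.3 lets me replace $\lambda_i$ by the mean of $b_i$ on the appropriate doubling enlargement at cost $Ck||b_i||_{\ast}$; Lemma 2.2 then bounds $\int_{6^{k+1}B}|b_i-\text{mean}|\,|f_i|\,d\mu$ by $Ck||b_i||_{\ast}\,\mu(6^{k+1}B)^{1/p_i'}\,||f_i\chi_{6^{k+1}B}||_{L^{p_i}(\mu)}$. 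Summing the double series in $j,k$ (using (1.5) and the power-law scaling of $\lambda$ to absorb the $\mu/\lambda$ ratios) and recognizing the resulting factor as $M_{p_i,(5)}f_i(x)$ delivers the desired bound.

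The $K_{B,Q}$-normalized piece of $M^{\sharp}_{\delta}$, for doubling balls $B\subset Q$ with $x\in B$, follows from applying the same decomposition with center $B$; the only essentially new step is the comparison $|c_B-c_Q|$, which telescopes over the $N_{B,Q}$ intermediate scales $6^{k}B$ by repeated use of Lemma 2.3 and produces a final factor $K_{B,Q}$ through (1.5). Inequalities (2.12) and (2.13) for $[b_1,T]$ and $[b_2,T]$ are proved by exactly the same scheme but with a single RBMO factor and no cross-term, so the computations are strictly simpler.
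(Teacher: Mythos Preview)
Your overall architecture matches the paper's exactly: same expansion around $\lambda_i=m_{\widetilde B}(b_i)$, same choice of $c_B=h_B$, same splitting $f_i=f_i^0+f_i^\infty$, and the same appeal to Tolsa's reduction. Two points, however, are treated too loosely and would not go through as written.

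\textbf{Mixed pieces.} You say the three pieces of $T((b_1-\lambda_1)f_1,(b_2-\lambda_2)f_2)$ involving at least one $f_k^0$ are handled by Kolmogorov together with the $L^1\times L^1\to L^{1/2,\infty}$ bound. This works only for the purely local piece $(f_1^0,f_2^0)$. For the mixed pieces $(f_1^0,f_2^\infty)$ and $(f_1^\infty,f_2^0)$, the far factor $(b_2-\lambda_2)f_2^\infty$ has no useful $L^1$ bound tied to $B$, so Kolmogorov gives nothing usable. The paper instead estimates these pieces pointwise using the \emph{size} condition (1.6) on the kernel: one integrates the near variable over $\frac{6}{5}B$ and decomposes the far variable over annuli $6^k\frac{6}{5}B$, using $\lambda(z,d(z,y_2))\gtrsim\lambda(z,6^{k-1}\tfrac{6}{5}r_B)$ and Lemma~2.3 to control the oscillation $|m_{\widetilde{6^k\frac{6}{5}B}}(b_2)-m_{\widetilde B}(b_2)|\le Ck\|b_2\|_\ast$. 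This is exactly the computation in (2.23).

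\textbf{Doubling-ball comparison.} Describing $|c_B-c_Q|$ as a telescoping over the $N_{B,Q}$ scales with a single factor of $K_{B,Q}$ undersells what is needed. Two things change simultaneously between $h_B$ and $h_Q$: the cut-off (from $X\setminus\frac{6}{5}B$ to $X\setminus\frac{6}{5}Q$) and the constants (from $m_{\widetilde B}(b_i)$ to $m_Q(b_i)$). The paper separates these via a six-term decomposition $F_1,\dots,F_6$ (equation (2.27)); the term $F_2$ accounting for the change of constants is itself expanded into $F_{21},F_{22},F_{23}$ and then into seven pieces $H_1,\dots,H_7$ (equation (2.31)), several of which require the full $L^p$ boundedness of $T$ (Lemma~2.4), not just a telescoping of means. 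The outcome is a bound $|h_B-h_Q|\le CK_{B,Q}^{2}[\cdots]$, with $K_{B,Q}^2$ rather than $K_{B,Q}$; this squared factor is absorbed only because one is invoking the Tolsa-type reduction (``as in the proof of Theorem~9.1 in [19]'') rather than the literal definition (2.1) of $M^\sharp$.
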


\begin{proof}
Because $L^{\infty}(\mu)$ with compact support is dense in
$L^{p}(\mu)$ for $1<p<\infty$, we only consider the situation of
$f_{1},f_{2}\in L^{\infty}(\mu)$ with compact support. Also, by
Corollary 3.11 in [4], without loss of generality, we can assume
that $b_{1},b_{2}\in L^{\infty}(\mu)$.

 As in the proof of Theorem 9.1 in [19], to obtain (2.11), it suffices to show that
\begin{equation}
\begin{split}
&\biggl(\frac{1}{\mu(6B)}\int_{B}||[b_{1},b_{2},T](f_{1},f_{2})(z)|^{\delta}-|h_{B}|^{\delta}|d\mu(z)\biggr
)^{1/\delta}\\
\leq &C||b_{1}||_{\ast}||b_{2}||_{\ast}M_{r,(6)}(T(f_{1},f_{2}))(x)
+C||b_{1}||_{\ast}M_{r,(6)}([b_{2},T](f_{1},f_{2}))(x)\\
&+C||b_{2}||_{\ast}M_{r,(6)}([b_{1},T](f_{1},f_{2}))(x)+C||b_{1}||_{\ast}||b_{2}||_{\ast}M_{p_{1},(5)}f_{1}(x)M_{p_{2},(5)}f_{2}(x),
\end{split}
\end{equation}
 holds for
any $x$ and ball $B$ with $x\in B$, and
\begin{equation}
\begin{split}
|h_{B}-h_{Q}|& \leq
CK_{B,Q}^{2}\biggr[||b_{1}||_{\ast}||b_{2}||_{\ast}M_{r,(6)}(T(f_{1},f_{2}))(x)
+||b_{1}||_{\ast}M_{r,(6)}([b_{2},T](f_{1},f_{2}))(x)\\
&\ \ +||b_{2}||_{\ast}M_{r,(6)}([b_{1},T](f_{1},f_{2}))(x)
+||b_{1}||_{\ast}||b_{2}||_{\ast}M_{p_{1},(5)}f_{1}(x)M_{p_{2},(5)}f_{2}(x)\biggr].
\end{split}
\end{equation}
for all balls $B\subset Q$ with $x\in B$, where $B$ is an arbitrary
ball, $Q$ is a doubling ball. For any ball $B$, we denote
$$h_{B}:= m_{B}(T((b_{1}-
m_{\tilde{B}}(b_{1}))f_{1}\chi_{X\backslash\frac{6}{5}B},(b_{2}-m_{\tilde{B}}(b_{2}))f_{2}\chi_{X\backslash\frac{6}{5}B})),$$
and
$$h_{Q}:= m_{Q}(T((b_{1}-
m_{Q}(b_{1}))f_{1}\chi_{X\backslash\frac{6}{5}Q},(b_{2}-m_{Q}(b_{2}))f_{2}\chi_{X\backslash\frac{6}{5}Q})).$$

Write
$$[b_{1},b_{2},T]=T((b_{1}-b_{1}(z))f_{1},(b_{2}-b_{2}(z))f_{2}),$$
and
\begin{equation}
\begin{split}
&T((b_{1}-m_{\tilde{B}}(b_{1}))f_{1},(b_{2}-m_{\tilde{B}}(b_{2}))f_{2})\\
=&T((b_{1}-b_{1}(z)+b_{1}(z)-m_{\tilde{B}}(b_{1}))f_{1},(b_{2}-b_{2}(z)+b_{2}(z)-m_{\tilde{B}}(b_{2}))f_{2})\\
=&(b_{1}(z)-m_{\tilde{B}}(b_{1}))(b_{2}(z)-m_{\tilde{B}}(b_{2}))T(f_{1},f_{2})\\
&\   -(b_{1}(z)-m_{\tilde{B}}(b_{1}))T(f_{1},(b_{2}-b_{2}(z))f_{2})\\
&\
-(b_{2}(z)-m_{\tilde{B}}(b_{2}))T((b_{1}-b_{1}(z))f_{1},f_{2})+T((b_{1}-b_{1}(z))f_{1},(b_{2}-b_{2}(z))f_{2}).
\end{split}
\end{equation}
Then
\begin{equation}
\begin{split}
&\biggl(\frac{1}{\mu(6B)}\int_{B}||[b_{1},b_{2},T](f_{1},f_{2})(z)|^{\delta}-|h_{B}|^{\delta}d\mu(z)\biggr
)^{1/\delta}\\
\leq&
C\biggl(\frac{1}{\mu(6B)}\int_{B}|[b_{1},b_{2},T](f_{1},f_{2})(z)-h_{B}|^{\delta}d\mu(z)\biggr)^{1/\delta}\\
\leq&
C\biggl(\frac{1}{\mu(6B)}\int_{B}|(b_{1}(z)-m_{\tilde{B}}(b_{1}))(b_{2}(z)-m_{\tilde{B}}(b_{2}))T(f_{1},f_{2})(z)|^{\delta}d\mu(z)\biggr)^{1/\delta}\\
&\ \ +C\biggl(\frac{1}{\mu(6B)}\int_{B}|(b_{1}(z)-m_{\tilde{B}}(b_{1}))T(f_{1},(b_{2}-b_{2}(z))f_{2})(z)|^{\delta}d\mu(z)\biggr)^{1/\delta}\\
&\ \ +C\biggl(\frac{1}{\mu(6B)}\int_{B}|(b_{2}(z)-m_{\tilde{B}}(b_{2}))T((b_{1}-b_{1}(z))f_{1},f_{2})(z)|^{\delta}d\mu(z)\biggr)^{1/\delta}\\
&\ \
+C\biggl(\frac{1}{\mu(6B)}\int_{B}|T((b_{1}-m_{\tilde{B}}(b_{1}))f_{1},(b_{2}-m_{\tilde{B}}(b_{2}))f_{2})(z)-h_{B}|^{\delta}d\mu(z)\biggr)^{1/\delta}\\
=&:E_{1}+E_{2}+E_{3}+E_{4}.
\end{split}
\end{equation}
We firstly estimate $E_{1}$. Let $r_{1},r_{2}>1$ such that
$\dfrac{1}{r}+\dfrac{1}{r_{1}}+\dfrac{1}{r_{2}}=\dfrac{1}{\delta}$.
By H\"{o}lder's inequality and Lemma 2.2, we yields

\begin{equation}
\begin{split}
E_{1}&\leq
C\biggl(\frac{1}{\mu(6B)}\int_{B}|b_{1}(z)-m_{\tilde{B}}b_{1}|^{r_{1}}d\mu(z)\biggr)^{1/r_{1}}\\
&\ \ \times\biggl(\frac{1}{\mu(6B)}\int_{B}|b_{2}(z)-m_{\tilde{B}}b_{2}|^{r_{2}}d\mu(z)\biggr)^{1/r_{2}}\\
&\ \ \times\biggl(\frac{1}{\mu(6B)}\int_{B}|T(f_{1},f_{2})|^{r}d\mu(z)\biggr)^{1/r}\\
&\leq C||b_{1}||_{\ast}||b_{2}||_{\ast}M_{r,(6)}(T(f_{1},f_{2}))(x).
\end{split}
\end{equation}

For $E_{2}$, let $s>1$ such that
$\dfrac{1}{s}+\dfrac{1}{r}=\dfrac{1}{\delta}$, by H\"{o}lder's
inequality and Lemma 2.2, we deduce

\begin{equation}
\begin{split}
E_{2}&\leq
C\biggl(\frac{1}{\mu(6B)}\int_{B}|b_{1}(z)-m_{\tilde{B}}b_{1}|^{s}d\mu(z)\biggr)^{1/s}\\
&\ \ \times\biggl(\frac{1}{\mu(6B)}\int_{B}|[b_{2},T](f_{1},f_{2})|^{r}d\mu(z)\biggr)^{1/r}\\
&\leq C||b_{1}||_{\ast}M_{r,(6)}([b_{2},T](f_{1},f_{2}))(x).
\end{split}
\end{equation}

Similar to estimate $E_{2}$, we immediately get
\begin{equation}
E_{3}\leq C||b_{2}||_{\ast}M_{r,(6)}([b_{1},T](f_{1},f_{2}))(x).
\end{equation}

Let us turn to estimate $E_{4}$. Denote
$f_{j}^{1}=f_{j}\chi_{\frac{6}{5}B}$ and $f_{j}^{2}=f_{j}-f_{j}^{1}$
for $j=1,2$. Then
\begin{equation}
\begin{split}
E_{4}&\leq
C\biggl(\frac{1}{\mu(6B)}\int_{B}|T((b_{1}-m_{\tilde{B}}b_{1})f_{1}^{1}(z),(b_{2}-m_{\tilde{B}}b_{2})f_{2}^{1})(z)|^{\delta}d\mu(z)\biggr)^{1/\delta}\\
&\ \ \ +
C\biggl(\frac{1}{\mu(6B)}\int_{B}|T((b_{1}-m_{\tilde{B}}b_{1})f_{1}^{1}(z),(b_{2}-m_{\tilde{B}}b_{2})f_{2}^{2})(z)|^{\delta}d\mu(z)\biggr)^{1/\delta}\\
&\ \ \ +
C\biggl(\frac{1}{\mu(6B)}\int_{B}|T((b_{1}-m_{\tilde{B}}b_{1})f_{1}^{2}(z),(b_{2}-m_{\tilde{B}}b_{2})f_{2}^{1})(z)|^{\delta}d\mu(z)\biggr)^{1/\delta}\\
&\ \ \ +
C\biggl(\frac{1}{\mu(6B)}\int_{B}|T((b_{1}-m_{\tilde{B}}b_{1})f_{1}^{2}(z),(b_{2}-m_{\tilde{B}}b_{2})f_{2}^{2})(z)-h_{B}|^{\delta}d\mu(z)\biggr)^{1/\delta}\\
&=:E_{41}+E_{42}+E_{43}+E_{44}.
\end{split}
\end{equation}

To estimate $E_{41}$, we need the classical Kolmogorov's theorem:
Let $(X,\mu)$ be a probability measure space and let $0<p<q<\infty$,
then there exists a constant $C>0$, such that $||f||_{L^{p}(\mu)}\le
C||f||_{L^{q,\infty}(\mu)}$ for any measurable function $f$. Let
$p=\delta$ and $q=1/2$ such that $0<\delta<1/2$. Using Kolmogorov's
theorem, Lemma 2.2 and H\"{o}lder's inequality, we obtain
\begin{equation}
\begin{split}
&E_{41}\leq
C||T((b_{1}-m_{\tilde{B}}b_{1})f_{1}^{1},(b_{2}-m_{\tilde{B}}b_{2})f_{2}^{1})||_{L^{1/2,\infty}(\frac{6}{5}B,\frac{d\mu(z)}{\mu(6B)})}\\
\leq&
C\frac{1}{\mu(6B)}\int_{\frac{6}{5}B}|(b_{1}-m_{\tilde{B}}b_{1})f_{1}(z)|d\mu(z)\\
&\ \ \times\frac{1}{\mu(6B)}\int_{\frac{6}{5}B}|(b_{2}-m_{\tilde{B}}b_{2})f_{2}(z)|d\mu(z)\\
\leq&
C(\frac{1}{\mu(6B)}\int_{\frac{6}{5}B}|(b_{1}-m_{\tilde{B}}b_{1}|^{p'_{1}}d\mu(z))^{1/p'_{1}}\\
&\ \ \times(\frac{1}{\mu(6B)}\int_{\frac{6}{5}B}|f_{1}(z)|^{p_{1}}d\mu(z))^{1/p_{1}}\\
&\ \ \times(\frac{1}{\mu(6B)}\int_{\frac{6}{5}B}|(b_{2}-m_{\tilde{Q}}b_{2})|^{p'_{2}}d\mu(z))^{1/p'_{2}}\\
&\ \ \times(\frac{1}{\mu(6B)}\int_{\frac{6}{5}B}|f_{2}(z)|^{p_{2}}d\mu(z))^{1/p_{2}}\\
\leq&
C||b_{1}||_{\ast}||b_{2}||_{\ast}M_{p_{1},(5)}f_{1}(x)M_{p_{2},(5)}f_{2}(x).
\end{split}
\end{equation}

To compute $E_{42}$, using (i) of Definition 1.7, Lemma 2.1, Lemma 2.2,
H\"{o}lder's inequality and the properties of $\lambda$, we know

\begin{equation}
\begin{split}
E_{42}&\leq
C\frac{1}{\mu(6B)}\int_{X}\int_{X}\int_{X\backslash\frac{6}{5}B}
\frac{|b_{1}(y_{1})-m_{\tilde{B}}b_{1}||f_{1}^{1}(y_{1})|}
{[\lambda(z,d(z,y_{1}))+\lambda(z,d(z,y_{2}))]^{2}}\\
&\ \ \ \ \times|b_{2}(y_{2})-m_{\tilde{B}}b_{2}||f_{1}^{2}(y_{2})|d\mu(y_{1})d\mu(y_{2})d\mu(z)\\
&\leq C\frac{1}{\mu(6B)}\int_{B}\int_{\frac{6}{5}B}
|b_{1}(y_{1})-m_{\tilde{B}}b_{1}||f_{1}(y_{1})|d\mu(y_{1})\\
&\ \ \ \times\int_{X\backslash\frac{6}{5}B}
\frac{|b_{2}(y_{2})-m_{\tilde{B}}b_{2}||f_{2}(y_{2})|d\mu(y_{2})}{[\lambda(z,d(z,y_{2}))]^{2}}d\mu(z)\\
&\leq
C(\frac{1}{\mu(6B)}\int_{\frac{6}{5}B}|b_{1}(y_{1})-m_{\tilde{B}}b_{1}|^{p'_{1}}d\mu(y_{1}))^{1/p'_{1}}\\
&\ \ \ \times(\frac{1}{\mu(6B)}\int_{\frac{6}{5}B}|f_{1}(y_{1})|^{p_{1}}d\mu(y_{1}))^{1/p_{1}}\\
&\ \ \ \times
\mu(B)\sum_{k=1}^{\infty}\int_{6^{k}\frac{6}{5}B}\frac{|b_{2}(y_{2})-m_{\tilde{B}}
b_{2}||f_{2}(y_{2})|}{[\lambda(z,6^{k-1}\frac{6}{5}r_{B})]^{2}}d\mu(y_{2})\\
&\leq
C||b_{1}||_{\ast}M_{p_{1},(5)}f_{1}(x)\sum_{k=1}^{\infty}6^{-km}\frac{\mu(B)}{\mu(\frac{6}{5}B)}\frac{\mu(\frac{6}{5}B)}{\lambda(z,\frac{6}{5}r_{B})}\\
&\ \ \ \times\frac{1}{\lambda(z,6^{k-1}\frac{6}{5}r_{B})}
\int_{6^{k}\frac{6}{5}B}|b_{2}(y_{2})-m_{\tilde{B}}
b_{2}||f_{2}(y_{2})|d\mu(y_{2})\\
&\leq
C||b_{1}||_{\ast}M_{p_{1},(5)}f_{1}(x)\sum_{k=1}^{\infty}6^{-km}\frac{1}{\mu(5\times6^{k}\frac{6}{5}B)}\\
&\ \ \ \times\int_{6^{k}\frac{6}{5}B}
|b_{2}(y_{2})-m_{\widetilde{6^{k}\frac{6}{5}B}}(b_{2})+m_{\widetilde{6^{k}\frac{6}{5}B}}(b_{2})-m_{\tilde{B}}b_{2}||f_{2}(y_{2})|d\mu(y_{2})\\
&\leq
C||b_{1}||_{\ast}M_{p_{1},(5)}f_{1}(x)\sum_{k=1}^{\infty}6^{-km}\biggl[\biggl(\frac{1}{\mu(5\times6^{k}\frac{6}{5}B)}\\
&\ \ \ \times\int_{6^{k+1}\frac{6}{5}B}|b_{2}(y_{2})-m_{\widetilde{6^{k}\frac{6}{5}B}}(b_{2})|^{p'_{2}}d\mu(y_{2})\biggr)^{1/p'_{2}}\\
&\ \ \ \times
\biggl(\frac{1}{\mu(5\times6^{k}\frac{6}{5}B)}\int_{6^{k}\frac{6}{5}B}|f_{2}(y_{2})|^{p_{2}}d\mu(y_{2})\biggr)^{1/p_{2}}\\
&\ \ \ +C
k||b_{2}||_{\ast}\frac{1}{\mu(5\times6^{k}\frac{6}{5}B)}\int_{6^{k}\frac{6}{5}B}|f_{2}(y_{2})|d\mu(y_{2})\biggr]\\
&\leq
C||b_{1}||_{\ast}||b_{2}||_{\ast}M_{p_{1},(5)}f_{1}(x)M_{p_{2},(5)}f_{2}(x).\\
\end{split}
\end{equation}
Similarly, we get
\begin{equation}
E_{43}\leq
C||b_{1}||_{\ast}||b_{2}||_{\ast}M_{p_{1},(5)}f_{1}(x)M_{p_{2},(5)}f_{2}(x).
\end{equation}

For $E_{44}$, by (ii) of Definition 1.7, Lemma 2.1, Lemma 2.2,
H\"{o}lder's inequality and the properties of $\lambda$, we obtain
\begin{equation}
\begin{split}
&|T((b_{1}-m_{\tilde{B}}b_{1})f_{2}^{2},(b_{2}-m_{\tilde{B}}b_{2})f_{2}^{2})(z)
-T((b_{1}-m_{\tilde{B}}b_{1})f_{2}^{2},(b_{2}-m_{\tilde{B}}b_{2})f_{2}^{2})(z_{0})|\\
\leq &C\int_{X\backslash
\frac{6}{5}B}\int_{X\backslash\frac{6}{5}B}|K(z,y_{1},y_{2})-K(z_{0},y_{1},y_{2})|\\
&\ \ \ \times|\prod_{i=1}^{2}(b_{i}(y_{i})-m_{\tilde{B}}b_{i})f_{i}(y_{i})|d\mu(y_{1})d\mu(y_{2})\\
\leq &C\int_{X\backslash
\frac{6}{5}B}\int_{X\backslash\frac{6}{5}B}\frac{d(z,z_{0})^{\delta}
|\prod_{i=1}^{2}(b_{i}(y_{i})-m_{\tilde{B}}b_{i})f_{i}(y_{i})|d\mu(y_{1})d\mu(y_{2})}
{(d(z,y_{1})+d(z,y_{2}))^{\delta}[\sum_{j=1}^{2}\lambda(z,d(z,y_{j}))]^{2}}\\
\leq &C\prod_{i=1}^{2}\int_{X\backslash
\frac{6}{5}B}\frac{d(z,z_{0})^{\delta_{i}}
|b_{i}(y_{i})-m_{\tilde{B}}b_{i}||f_{i}(y_{i})d\mu(y_{i})|}{d(z,y_{i})^{\delta_{i}}\lambda(z,d(z,y_{i}))}\\
\leq
&C\prod_{i=1}^{2}\sum_{k=1}^{\infty}\int_{6^{k}\frac{6}{5}B}6^{-k\delta_{i}}\frac{\mu(5\times6^{k}\frac{6}{5}B)}
{\lambda(z,5\times6^{k}\frac{6}{5}r_{B})}\frac{1}{\mu(5\times6^{k}\frac{6}{5}B)}|b_{i}(y_{i})-m_{\tilde{B}}b_{i}||f_{i}|d\mu(y_{i})\\
\leq
&C\prod_{i=1}^{2}\sum_{k=1}^{\infty}6^{-k\delta_{i}}(\frac{1}{\mu(5\times6^{k}\frac{6}{5}B)}
\int_{6^{k}\frac{6}{5}B}|b_{i}(y_{i})-m_{\tilde{B}}b_{i}|^{p'_{i}}d\mu(y_{i}))^{1/p'_{i}}\\
&\ \ \ \times(\frac{1}{\mu(5\times6^{k}\frac{6}{5}B)}
\int_{6^{k}\frac{6}{5}B}|f_{i}|^{p_{i}})^{1/p_{i}}\\
\leq
&C\prod_{i=1}^{2}\sum_{k=1}^{\infty}6^{-k\delta_{i}}M_{p_{i},(6)}f_{i}(x)(\frac{1}{\mu(5\times6^{k}\frac{6}{5}B)}
\int_{6^{k}\frac{6}{5}B}|b_{i}(y_{i})-m_{\widetilde{6^{k}\frac{6}{5}B}}\\
&\ \ \ +m_{\widetilde{6^{k}\frac{6}{5}B}}-m_{\tilde{B}}b_{i}|^{p'_{i}}d\mu(y_{i}))^{1/p'_{i}}\\
\leq
&C\prod_{i=1}^{2}\sum_{k=1}^{\infty}6^{-k\delta_{i}}k||b_{i}||_{\ast}M_{p_{i},(5)}f_{i}(x)\\
\leq&
C||b_{1}||_{\ast}||b_{2}||_{\ast}M_{p_{1},(5)}f_{1}(x)M_{p_{2},(5)}f_{2}(x).
\end{split}
\end{equation}
where $\delta_{1},\delta_{2}>0$ and $\delta_{1}+\delta_{2}=\delta$.

Taking the mean over $z_{0}\in B$, we deduce
\begin{equation}
E_{44}\leq
C||b_{1}||_{\ast}||b_{2}||_{\ast}M_{p_{1},(5)}f_{1}(x)M_{p_{2},(5)}f_{2}(x).
\end{equation}
So (2.14) can be obtain from (2.17) to (2.26).

Next we prove (2.15). Consider two balls $B\subset Q$ with $x\in B$,
where $B$ is an arbitrary ball and $Q$ is a doubling ball. Let
$N=N_{B,Q}+1$, then we obtain

\begin{equation}
\begin{split}
&\biggl||m_{B}T((b_{1}-m_{\tilde{B}}b_{1})f_{1}^{2},(b_{2}-m_{\tilde{B}}b_{2})f_{2}^{2})|\\
&\ \ -|m_{Q}T((b_{1}-m_{Q}b_{1})f_{1}^{2},(b_{2}-m_{Q}b_{2})f_{2}^{2})|\biggr|\\
\leq
&|m_{B}T((b_{1}-m_{\tilde{B}}b_{1})f_{1}\chi_{X\backslash6^{N}B},(b_{2}-m_{\tilde{B}}b_{2})f_{2}\chi_{X\backslash6^{N}B})\\
&\ \ -m_{Q}T((b_{1}-m_{\tilde{B}}b_{1})f_{1}\chi_{X\backslash6^{N}B},(b_{2}-m_{\tilde{B}}b_{2})f_{2}\chi_{X\backslash6^{N}B})|\\
&\ \
+|m_{Q}T((b_{1}-m_{Q}b_{1})f_{1}\chi_{X\backslash6^{N}B},(b_{2}-m_{Q}b_{2})f_{2}\chi_{X\backslash6^{N}B})\\
&\ \
-m_{Q}T((b_{1}-m_{\tilde{B}}b_{1})f_{1}\chi_{X\backslash6^{N}B},(b_{2}-m_{\tilde{B}}b_{2})f_{2}\chi_{X\backslash6^{N}B})|\\
&\ \
+|m_{B}T((b_{1}-m_{\tilde{B}}b_{1})f_{1}\chi_{6^{N}B\backslash\frac{6}{5}B},(b_{2}-m_{\tilde{B}}b_{2})f_{2}\chi_{X\backslash\frac{6}{5}B})\\
&\ \
+|m_{B}T((b_{1}-m_{\tilde{B}}b_{1})f_{1}\chi_{X\backslash\frac{6}{5}B},(b_{2}-m_{\tilde{B}}b_{2})f_{2}\chi_{6^{N}B\backslash\frac{6}{5}B})\\
&\ \
+|m_{Q}T((b_{1}-m_{Q}b_{1})f_{1}\chi_{6^{N}B\backslash\frac{6}{5}Q},(b_{2}-m_{Q}b_{2})f_{2}\chi_{X\backslash6^{N}B})\\
&\ \
+|m_{Q}T((b_{1}-m_{Q}b_{1})f_{1}\chi_{X\backslash\frac{6}{5}Q},(b_{2}-m_{Q}b_{2})f_{2}\chi_{6^{N}B\backslash\frac{6}{5}Q})\\
=:&F_{1}+F_{2}+F_{3}+F_{4}+F_{5}+F_{6}.
\end{split}
\end{equation}

Using the method to estimate $E_{4}$, we get
\begin{equation}
F_{1}\leq
CK_{B,Q}^{2}||b_{1}||_{\ast}||b_{2}||_{\ast}M_{p_{1},(5)}f_{1}(x)M_{p_{2},(5)}f_{2}(x).
\end{equation}

Let us estimate $F_{2}$. At first, we compute

\begin{equation}
\begin{split}
&T((b_{1}-m_{Q}b_{1})f_{1}\chi_{X\backslash6^{N}B},(b_{2}-m_{Q}b_{2})f_{2}\chi_{X\backslash6^{N}B})(z)\\
&\ \ -T((b_{1}-m_{\tilde{B}}b_{1})f_{1}\chi_{X\backslash6^{N}B},(b_{2}-m_{\tilde{B}}b_{2})f_{2}\chi_{X\backslash6^{N}B})(z)\\
=&(m_{Q}b_{2}-m_{\tilde{B}}b_{2})T((b_{1}-m_{Q}b_{1})f_{1}\chi_{X\backslash6^{N}B},f_{2}\chi_{X\backslash6^{N}B})(z)\\
&\ \ +(m_{Q}b_{1}-m_{\tilde{B}}b_{1})T(f_{1}\chi_{X\backslash6^{N}B},(b_{2}-m_{Q}b_{2})f_{2}\chi_{X\backslash6^{N}B})(z)\\
&\ \ +(m_{Q}b_{1}-m_{\tilde{B}}b_{1})(m_{Q}b_{2}-m_{\tilde{B}}b_{2})T(f_{1}\chi_{X\backslash6^{N}B},f_{2}\chi_{X\backslash6^{N}B})(z).\\
\end{split}
\end{equation}
Hence
\begin{equation}
\begin{split}
F_{2}\leq&|(m_{Q}b_{2}-m_{\tilde{B}}b_{2})\frac{1}{\mu(Q)}\int_{Q}T((b_{1}-m_{Q}b_{1})f_{1}
\chi_{X\backslash6^{N}B},f_{2}\chi_{X\backslash6^{N}B})(z)d\mu(z)|\\
&\ +|(m_{Q}b_{1}-m_{\tilde{B}}b_{1})\frac{1}{\mu(Q)}\int_{Q}T((f_{1}\chi_{X\backslash6^{N}B},(b_{2}-m_{Q}b_{2})f_{2}\chi_{X\backslash6^{N}B})(z)d\mu(z)|\\
&\
+|(m_{Q}b_{1}-m_{\tilde{B}}b_{1})(m_{Q}b_{2}-m_{\tilde{B}}b_{2})\frac{1}{\mu(Q)}\int_{Q}
T(f_{1}\chi_{X\backslash6^{N}B},f_{2}\chi_{X\backslash6^{N}B})(z)|\\
=:&F_{21}+F_{22}+F_{23}.
\end{split}
\end{equation}
To estimate $F_{21}$, we write
\begin{equation}
\begin{split}
&T((b_{1}-m_{Q}b_{1})f_{1}\chi_{X\backslash6^{N}Q},f_{2}\chi_{X\backslash6^{N}Q})(z)\\
=&T((b_{1}-m_{Q}b_{1})f_{1},f_{2})(z)-T((b_{1}-m_{Q}b_{1})f_{1}\chi_{6^{N}B}\chi_{\frac{6}{5}Q},f_{2}\chi_{\frac{6}{5}Q})(z))\\
&\ -T((b_{1}-m_{Q}b_{1})f_{1}\chi_{\frac{6}{5}Q},f_{2}\chi_{6^{N}B}\chi_{\frac{6}{5}Q})(z))\\
&\ +T((b_{1}-m_{Q}b_{1})f_{1}\chi_{6^{N}B}\chi_{\frac{6}{5}Q},f_{2}\chi_{6^{N}B}\chi_{\frac{6}{5}Q})(z)\\
&\ -T((b_{1}-m_{Q}b_{1})f_{1}\chi_{X\backslash\frac{6}{5}Q},f_{2}\chi_{6^{N}B})(z)\\
&\ -T((b_{1}-m_{Q}b_{1})f_{1}\chi_{6^{N}B},f_{2}\chi_{X\backslash\frac{6}{5}Q})(z)\\
&\
 +T((b_{1}-m_{Q}b_{1})f_{1}\chi_{6^{N}B\backslash\frac{6}{5}Q},f_{2}\chi_{6^{N}B\backslash\frac{6}{5}Q})(z)\\
 =:&H_{1}(z)+H_{2}(z)+H_{3}(z)+H_{4}(z)+H_{5}(z)+H_{6}(z)+H_{7}(z).
\end{split}
\end{equation}
Let us estimate $H_{1}(z)$ firstly. Since
$$\frac{1}{\mu(Q)}\int_{Q}|T(b_{1}-b_{1}(z)f_{1},f_{2})(z)|d\mu(z)\leq CM_{r,(6)}([b_{1},T]f_{1},f_{2})(x)$$
and by H\"{o}lder's inequality, we have
$$\frac{1}{\mu(Q)}\int_{Q}|(b_{1}(z)-m_{Q}(b_{1}))T(f_{1},f_{2})(z)|d\mu(z)\leq C||b_{1}||_{\ast}M_{r,(6)}(T(f_{1},f_{2}))(x),$$
then we obtain
\begin{equation}
\begin{split}
|m_{Q}(H_{1})|&\leq
|m_{Q}(T(b_{1}-b_{1}(z)f_{1},f_{2}))|+|m_{Q}((b_{1}(z)-m_{Q}(b_{1}))T(f_{1},f_{2}))|\\
&\leq
CM_{r,(6)}([b_{1},T]f_{1},f_{2})(x)+||b_{1}||_{\ast}M_{r,(6)}(T(f_{1},f_{2}))(x).
\end{split}
\end{equation}
For $H_{2}(z)$, let $r>1$ and $1<s_{1}<p_{1}$ such that
$\dfrac{1}{r}=\dfrac{1}{s_{1}}+\dfrac{1}{p_{2}}$. Denote
$\dfrac{1}{s_{2}}=\dfrac{1}{s_{1}}-\dfrac{1}{p_{1}}$, using the fact
of $Q$ is a doubling balls, Kolmogorov's inequality, H\"{o}lder's
inequality and Lemma 2.4, we yield
\begin{equation}
\begin{split}
|m_{Q}(H_{2})|\leq& C||H_{2}||_{L^{r,\infty}(Q,\frac{d\mu(z)}{\mu(Q)})}\\
\leq&
C(\frac{1}{\mu(Q)}\int_{\frac{6}{5}Q}|b_{1}-m_{Q}b_{1}|^{s_{1}}d\mu(z))^{1/s_{1}}
(\frac{1}{\mu(Q)}\int_{\frac{6}{5}Q}|f_{2}|^{p_{2}}d\mu(z))^{1/p_{2}}\\
\leq&
C(\frac{1}{\mu(6Q)}\int_{\frac{6}{5}Q}|b_{1}-m_{Q}b_{1}|^{s_{2}}d\mu(z))^{1/s_{2}}
(\frac{1}{\mu(6Q)}\int_{\frac{6}{5}Q}|f_{2}|^{p_{1}}d\mu(z))^{1/p_{1}}\\
&\ \ \times(\frac{1}{\mu(6Q)}\int_{\frac{6}{5}Q}|f_{2}|^{p_{2}}d\mu(z))^{1/p_{2}}\\
\leq& ||b_{1}||_{\ast}M_{p_{1},(5)}f_{1}(x)M_{p_{2},(5)}f_{2}(x).
\end{split}
\end{equation}
We  also can obtain
\begin{equation}
|m_{Q}(H_{3})|+|m_{Q}(H_{4})|\leq
C||b_{1}||_{\ast}M_{p_{1},(5)}f_{1}(x)M_{p_{2},(5)}f_{2}(x).
\end{equation}
For $H_{5}$, since $z\in Q$, by (i) of Definition 1.7, Lemma 2.1, Lemma
2.2, H\"{o}lder's inequality and $Q$ is a doubling ball, we deduce
\begin{equation}
\begin{split}
|H_{5}(z)|&\leq C
\int_{6^{N}B}\int_{X\backslash\frac{6}{5}Q}\frac{|b_{1}(y_{1})-m_{Q}b_{1}||f_{1}(y_{1})||f_{2}(y_{2})|d\mu(y_{1})d\mu(y_{2})}
{[\sum_{j=1}^{2}\lambda(z,d(x,y_{j}))]^{2}}\\
&\leq
C\int_{6^{N}B}|f_{2}(y_{2})|d\mu(y_{2})\sum_{k=1}^{\infty}\int_{6^{k}\frac{6}{5}Q}
\frac{|b_{1}(y_{1})-m_{Q}b_{1}||f_{1}(y_{1})|}{(\lambda(z,6^{k-1}\frac{6}{5}r_{Q}))^{2}}d\mu(y_{1})\\
&\leq
C\int_{6^{N}B}|f_{2}(y_{2})|d\mu(y_{2})\sum_{k=1}^{\infty}6^{-km}\\
&\ \ \ \times \int_{6^{k}\frac{6}{5}Q}
\frac{1}{\lambda(z,\frac{6}{5}r_{Q})}\frac{|b_{1}(y_{1})-m_{Q}b_{1}||f_{1}(y_{1})|d\mu(y_{1})}{\lambda(z,6^{k-1}\frac{6}{5}r_{Q})}\\
&\leq
C\frac{1}{\lambda(z,6r_{Q})}\int_{6^{N}B}|f_{2}(y_{2})|d\mu(y_{2})\sum_{k=1}^{\infty}6^{-km}\frac{1}{\lambda(z,5\times6^{k}\frac{6}{5}r_{Q})}\\
&\ \ \ \times
\biggl[\int_{6^{k}\frac{6}{5}Q}|b_{1}(y_{1})-m_{6^{k}\frac{6}{5}Q}(b_{1})||f_{1}(y_{1})|d\mu(y_{1})\\
&\ \ \ +\int_{6^{k}\frac{6}{5}Q}|m_{6^{k}\frac{6}{5}Q}(b_{1})-m_{Q}b_{1}||f_{1}(y_{1})|d\mu(y_{1})\biggr]\\
&\leq
C\frac{1}{\lambda(z,6r_{Q})}\int_{6^{N}B}|f_{2}(y_{2})|d\mu(y_{2})\sum_{k=1}^{\infty}6^{-km}\\
&\ \ \
\times\biggl[\biggl(\frac{1}{\lambda(z,5\times6^{k}\frac{6}{5}r_{Q})}\int_{6^{k}\frac{6}{5}Q}
|b_{1}(y_{1})-m_{6^{k}\frac{6}{5}Q}(b_{1})|^{p'_{1}}d\mu(y_{1})\biggr)^{1/p'_{1}}\\
&\ \ \ \ \times(\frac{1}{\lambda(z,5\times6^{k}\frac{6}{5}r_{Q})}\int_{6^{k}\frac{6}{5}Q}|f_{1}(y_{1})|^{p_{1}}d\mu(y_{1}))^{1/p_{1}}\\
&\ \ \ \ +k||b_{1}||_{\ast}\frac{1}{\lambda(z,5\times6^{k}\frac{6}{5}r_{Q})}\int_{6^{k}\frac{6}{5}Q}|f_{1}(y_{1})|d\mu(y_{1})\biggr]\\
&\leq
C\sum_{k=1}^{N}\frac{1}{\lambda(z,6r_{Q})}\int_{6^{k}B}|f_{2}(y_{2})|d\mu(y_{2})||b_{1}||_{\ast}M_{p_{1},(5)}f_{1}(x)\\
&\leq
C\sum_{k=1}^{N}\frac{\mu(5\times6^{k}B)}{\lambda(z,5\times6^{k}r_{B})}\frac{\lambda(z,5\times6^{k}r_{B})}{\lambda(z,6r_{Q})}\\
&\ \ \ \ \times\frac{1}{\mu(5\times6^{k}B)}\int_{6^{k}B}|f_{2}(y_{2})|d\mu(y_{2})||b_{1}||_{\ast}M_{p_{1},(5)}f_{1}(x)\\
&\leq
CK_{B,Q}||b_{1}||_{\ast}M_{p_{1},(5)}f_{1}(x)M_{p_{2},(5)}f_{2}(x).
\end{split}
\end{equation}
Then it yields
\begin{equation}
|m_{Q}(H_{5})|\leq
CK_{B,Q}||b_{1}||_{\ast}M_{p_{1},(5)}f_{1}(x)M_{p_{2},(5)}f_{2}(x).
\end{equation}
In the similar way to estimate $m_{Q}(H_{5})$, we also obtain
\begin{equation}
|m_{Q}(H_{6})|+|m_{Q}(H_{7})|\leq
CK_{B,Q}||b_{1}||_{\ast}M_{p_{1},(5)}f_{1}(x)M_{p_{2},(5)}f_{2}(x).
\end{equation}
From (2.8) in Lemma 2.2, we deduce
\begin{equation}
\begin{split}
F_{21}\leq&
CK_{B,Q}^{2}\{||b_{1}||_{\ast}||b_{2}||_{\ast}M_{r,(6)}(T(f_{1},f_{2}))(x)\\
&\ \ \ +||b_{1}||_{\ast}M_{r,(6)}([b_{2},T](f_{1},f_{2}))(x)\\
&\ \ \ +||b_{2}||_{\ast}M_{r,(6)}([b_{1},T](f_{1},f_{2}))(x)\\
&\ \ \ +||b_{1}||_{\ast}||b_{2}||_{\ast}M_{p_{1},(5)}f_{1}(x)M_{p_{2},(5)}f_{2}(x)\}.\\
\end{split}
\end{equation}
$F_{22}$ and $F_{23}$ also have similar estimate of $F_{21}$,
therefore,
\begin{equation}
\begin{split}
F_{2}\leq&
CK_{B,Q}^{2}\biggr\{||b_{1}||_{\ast}||b_{2}||_{\ast}M_{r,(6)}(T(f_{1},f_{2}))(x)\\
&\ \ \ +||b_{1}||_{\ast}M_{r,(6)}([b_{2},T](f_{1},f_{2}))(x)\\
&\ \ \ +||b_{2}||_{\ast}M_{r,(6)}([b_{1},T](f_{1},f_{2}))(x)\\
&\ \ \ +||b_{1}||_{\ast}||b_{2}||_{\ast}M_{p_{1},(5)}f_{1}(x)M_{p_{2},(5)}f_{2}(x)\biggr\}.
\end{split}
\end{equation}
From $F_{3}$ to $F_{6}$, using the similar method to estimate
$I_{4}$, we conclude

\begin{equation}
F_{3}+F_{4}+F_{5}+F_{6}\leq
C||b_{1}||_{\ast}||b_{2}||_{\ast}M_{p_{1},(5)}f_{1}(x)M_{p_{2},(5)}f_{2}(x).
\end{equation}
Thus (2.15) holds by from (2.27) to (2.40) and hence (2.11) is proved.  With
the same method to prove (2.11), we can obtain that (2.12) and (2.13) are
also hold. Here we omit the details. Thus the Lemma 2.5 is proved.
\end{proof}

\begin{proof}[Proof of Theorem 1.11]
 Let $0<\delta<1/2$, $1<p_{1},p_{2},q<\infty$,
$\dfrac{1}{q}=\dfrac{1}{p_{1}}+\dfrac{1}{p_{2}}$, $1<r<q$,
 $f_{1}\in
L^{p_{1}}(\mu)$, $f_{2}\in L^{p_{2}}(\mu)$, $b_{1}\in RBMO(\mu)$ and
$b_{2}\in RBMO(\mu)$.
 By $|f(x)|\leq
N_{\delta}f(x)$, Lemma 2.1, Lemma 2.3, Lemma 2.4, H\"{o}rder's inequality
and the boundedness of $M_{(\rho)}$ and $M_{r,(\rho)}$ for $\rho\geq
5$ and $q>r$, we obtain
\begin{equation}
\begin{split}
&||[b_{1},b_{2},T](f_{1},f_{2})||_{L^{q}(\mu)} \leq
||N_{\delta}([b_{1},b_{2},T](f_{1},f_{2}))||_{L^{q}(\mu)}\\
\leq &
C||M^{\sharp}_{\delta}([b_{1},b_{2},T](f_{1},f_{2}))||_{L^{q}(\mu)}\\
\leq &C||b_{1}||_{\ast}||b_{2}||_{\ast}||M_{r,(6)}(T(f_{1},f_{2}))||_{L^{q}(\mu)}\\
&
+C||b_{1}||_{\ast}||M_{r,(6)}([b_{2},T](f_{1},f_{2}))||_{L^{q}(\mu)}\\
&+C||b_{2}||_{\ast}||M_{r,(6)}([b_{1},T](f_{1},f_{2}))||_{L^{q}(\mu)}\\
&+C||b_{1}||_{\ast}||b_{2}||_{\ast}||M_{p_{1},(5)}f_{1}(x)M_{p_{2},(5)}f_{2}(x)||_{L^{q}(\mu)}\\
\leq
&C||b_{1}||_{\ast}||b_{2}||_{\ast}||f_{1}(x)||_{L^{p_{1}}(\mu)}||f_{2}(x)||_{L^{p_{2}}(\mu)}\\
& +C||b_{1}||_{\ast}||([b_{2},T](f_{1},f_{2}))||_{L^{q}(\mu)}\\
&+C||b_{2}||_{\ast}||([b_{1},T](f_{1},f_{2}))||_{L^{q}(\mu)}\\
\leq
&C||b_{1}||_{\ast}||b_{2}||_{\ast}||f_{1}(x)||_{L^{p_{1}}(\mu)}||f_{2}(x)||_{L^{p_{2}}(\mu)}\\
& +C||b_{1}||_{\ast}||M^{\sharp}_{\delta}([b_{2},T](f_{1},f_{2}))||_{L^{q}(\mu)}\\
&+C||b_{2}||_{\ast}||M^{\sharp}_{\delta}([b_{1},T](f_{1},f_{2}))||_{L^{q}(\mu)}\\
\leq &||b_{1}||_{\ast}||b_{2}||_{\ast}||f_{1}(x)||_{L^{p_{1}}(\mu)}||f_{2}(x)||_{L^{p_{2}}(\mu)}\\
&+C||b_{1}||_{\ast}||M_{r,(6)}(T(f_{1},f_{2}))(x)||_{L^{q}(\mu)}\\
&+C||b_{1}||_{\ast}||M_{p_{1},(5)}f_{1}(x)M_{p_{2},(5)}f_{2}(x)||_{L^{q}(\mu)}\\
&+C||b_{2}||_{\ast}||M_{r,(6)}(T(f_{1},f_{2}))(x)||_{L^{q}(\mu)}\\
&+C||b_{2}||_{\ast}||M_{p_{1},(5)}f_{1}(x)M_{p_{2},(5)}f_{2}(x)||_{L^{q}(\mu)}\\
\leq
&C||b_{1}||_{\ast}||b_{2}||_{\ast}||f_{1}(x)||_{L^{p_{1}}(\mu)}||f_{2}(x)||_{L^{p_{2}}(\mu)}.
\end{split}
\end{equation}
Thus the proof of Theorem 1.11 is completed.
\end{proof}

{\bf Acknowledgements} This work was supported by National Natural
Science Foundation of China (Grant No. 10371087) and Excellent Young Talent Foundation of Anhui Province (Grant No.2013SQRL080ZD).

\newpage

\end{document}